\documentclass[a4paper,11pt]{article}
\usepackage[scale={0.8,0.9},centering,includeheadfoot]{geometry}

\usepackage{amsfonts}
\usepackage{amsmath}
\usepackage{amsthm}

\newtheorem{theorem}{Theorem}
  \newtheorem{lemma}[theorem]{Lemma}

  \newtheorem{definition}[theorem]{Definition}

\newtheorem{remark}[theorem]{Remark}
\newtheorem{acknowledgement}[theorem]{Acknowledgement}

\begin{document}

\title{On the convergence of the Escalator Boxcar Train}
\author{\AA{}ke Br\"annstr\"om\footnotemark[2]\ \footnotemark[3] \and Linus Carlsson\footnotemark[2]  
\and Daniel Simpson \footnotemark[4] }
\maketitle

\renewcommand{\thefootnote}{\fnsymbol{footnote}}

\footnotetext[2]{Department of Mathematics and Mathematical Statistics, SE-90187 Ume\aa{}, Sweden.}
\footnotetext[3]{Evolution and Ecology Program, International Institute for Applied Systems Analysis, A-2361 Laxenburg, Austria.}
\footnotetext[4]{Department of Mathematical Sciences, Norwegian University of Science and Technology, N-7491 Trondheim, Norway}

\renewcommand{\thefootnote}{\arabic{footnote}}

\begin{abstract}
The Escalator Boxcar Train (EBT) is a numerical method that is widely used in
theoretical biology to investigate the dynamics of physiologically structured
population models, i.e., models in which individuals differ by size or other
physiological characteristics. The method was developed more than two decades
ago, but has so far resisted attempts to give a formal proof of convergence.
Using a modern framework of measure-valued solutions, we investigate the EBT
method and show that the sequence of approximating solution measures generated
by the EBT method converges weakly to the true solution measure under weak
conditions on the growth rate, birth rate, and mortality rate. In rigorously
establishing the convergence of the EBT method, our results pave the way for
wider acceptance of the EBT method beyond theoretical biology and constitutes
an important step towards integration with established numerical schemes.
\end{abstract}

\paragraph{Key words.} 
Escalator boxcar train, EBT, convergence, physiologically structured population models, PSPM, measure-valued solutions, transport equation

\paragraph{AMS subject classifications.}
65M12, 28A33, 92B05

\pagestyle{myheadings} \thispagestyle{plain} \markboth{BR\"ANNSTR\"OM ET AL.}{CONVERGENCE OF THE ESCALATOR BOXCAR TRAIN}

\section{Introduction}

The population dynamics of ecological and biological systems are often
described by an ordinary differential equation of the form 
\[
\dfrac{1}{N}\frac{dN}{dt}=\beta(N)-\mu(N), 
\]
where $N=N(t)$ is the total population size at time $t$, $\beta(N)$ is the
birth rate, and $\mu(N)$ is the mortality rate, both of which depends on the
population size. The key assumption in this type of model is that every
individual in the population is identical. This is clearly unreasonable in
many situations, including cases where the gap between birth size and
reproductive size is important. A more accurate description of the
population dynamics can be given by physiologically structured population
models (see e.g., \cite{Metz1986}). In these models, the birth rates, death
rates, and growth rates of individuals depend on their physiological state $%
x\in\Omega$, where $\Omega$ is the set of admissible states. In general,
these states can represent any aspects of individual physiology such as age,
size, mass, height, or girth. For the purpose of this manuscript, we will
work with a one-dimensional state space that we think of as representing
individual size, but other interpretations are possible and, as we note in
the concluding discussion, we expect that our results can easily be extended
to higher-dimensional state manifolds.

In order to specify a physiologically structured population model, we need
explicit representations for the mortality, growth, and fecundity rates of
individuals as well as the initial population structure. We assume that
these rates are respectively on the form $\mu(x,E_t)$, $g(x,E_t)$, and $%
\beta(x,E_t)$, where $x$ is the size (or more generally the state) of the
individual and $E_t$ is the environment that individuals experiences at time $%
t$. The environment is a key factor in the formulation of physiologically
structured population models and can, for example, represent the total
amount of nutrient available at time $t$ or the size-specific predation
rate, see e.g., \cite{Metz1986,Roos1997}. While the environment is often
low-dimensional, it could potentially be infinite-dimensional as would for
example be the case for the shading profile in a forest. Finally, we assume
that all new individuals have the same birth size $x_{b}$. With these
assumptions, one can show (see e.g., \cite{Roos1997}) that the density $%
u(x,t)$ of individuals of state $x$ at time $t$ is given by the first order,
non-linear, non-local hyperbolic partial differential equations with
non-local boundary condition 
\begin{subequations}
\label{PSPM}
\begin{align}
\frac{\partial}{\partial t}u(x,t)+\frac{\partial}{\partial x}\left(
g(x,E_t)~u(x,t)\right) & =-\mu(x,E_t)u(x,t),  \label{transport}
\\
g(x_{b},E_t)u(x_{b},t) & =\int_{x_{b}}^{\infty}\beta(\xi
,E_t)u(\xi,t)\,d\xi,  \label{transport_boundary} \\
u(x,0) & =u_{0}(x),   \label{transport_initial}
\end{align}
\end{subequations}
in which we assume that $x_{b}\leq x<\infty$ and $t\geq0$.

The first numerical method designed specifically for solving physiologically
structured population models was the inventively named Escalator Boxcar
Train (EBT) \cite{Roos1988}. Rather than approximating the solution
directly, it approximates the measure induced by the solution. Regardless of
its unconventional solution methodology, the EBT method is widely used by
theoretical biologists (see e.g., \cite%
{briggs_dynamical_1995,goetz_using_2008,persson_ontogenetic_1998,xabadia_optimal_2010}%
). One of the reasons for the popularity of the EBT method can be ascribed
to the simple biological interpretation of the components of the scheme: the
state-space is partitioned into initial cohorts and, for the $i$th cohort,
the EBT method tracks its size $N_{i}(t)$ and the location of its centre of
mass $X_{i}(t)$ (see e.g., \cite{Roos1997}). The solution measure $%
d\zeta_{t}:=$ $u(t,x)\,dx$ is then approximated by
\begin{equation}
d\zeta_{t}\approx
d\zeta_{t}^{N}\equiv\sum_{i=B}^{N}N_{i}(t)\delta_{X_{i}(t)},
\end{equation}
where $\delta_{x}$ is the Dirac measure concentrated at $x$. The dynamics of
the functions $N_{i}$ and $X_{i}$ will be defined in Sect. \ref{sec:EBT}.
The boundary cohort corresponding to $i=B$ is treated differentially from
the other cohorts to account for newborn individuals. In the original
formulation, \cite{Roos1988}, this included terms correcting for changes in
the average mass arising from the inflow of newborn individuals. For
completeness, we consider the original definition of the boundary cohort in
Sect. \ref{Section_OD}.

The convergence of the EBT method has remained an open question since the
method was first introduced in 1988. The most successful analysis was
performed by de Roos and Metz \cite{Roos1991} in 1991. They studied how well
the EBT method approximates integrals of the form $\int_{\Omega}%
\psi(x)u(x,t)\,dx$ for smooth functions $\psi,$ assuming that cohorts are
not internalized (see Sect. \ref{sec:EBT}). The result does not assert the 
\emph{convergence} of the EBT method but rather, in the language used by de
Roos and Metz, that the EBT method \emph{consistently} approximates
integrals of the solution to \eqref{PSPM}. One reason for the lack of
progress is that the usual analytical techniques for analyzing finite
element and finite difference schemes are not immediately applicable to the
measure-valued case. Over the last two decades, however, the theory of
structured population models has been extensively developed \cite%
{Diekmann1998,Diekmann2001,Diekmann2005,diekmann_second_2008,diekmann_stability_2008,diekmann_daphnia_2009,Gwiazda2010}%
, and for the first time a full analysis of the EBT method is within our
reach.

The aim of this paper is to rigorously prove the convergence of the EBT\
method. We show that the EBT method converges under far weaker conditions on
the growth, death and birth functions than the conditions assumed by de Roos
and Metz \cite{Roos1991}. Our arguments build on recent theoretical
developments by Gwiazda et al. \cite{Gwiazda2010} that extend the classical
concept of weak solutions to measured-valued solutions. In the following
section, we describe the EBT method in full detail, define weak convergence
of measures, and define weak solutions to the physiologically structured
population model \eqref{PSPM}. In Sect.~\ref{Section_3} we prove the
convergence of the EBT method with dynamics of the boundary cohort as
introduced in this paper. Our convergence result is then extended to the
original definition of the boundary cohort in Sect.~\ref{Section_OD}. We
conclude by placing our results into context and by highlighting promising
directions for future work. Theorem \ref{MainThmWONumerics} and Theorem \ref%
{MainThmEBTConvergence} are the main results of this paper.

\section{The Escalator Boxcar Train}

\label{sec:EBT}

The EBT\ method is a numerical scheme for solving physiologically structured
population models (PSPMs, see e.g., \cite{Metz1986}). While there are many
possible formulations of PSPMs, several of which are described in the
excellent book by Metz and Diekmann \cite{Metz1986}, we consider the
numerical solution of the one-dimensional PSPM with a single birth state $%
x_{b}$ defined by \eqref{transport}, \eqref{transport_boundary}, and %
\eqref{transport_initial}. The EBT\ method determines an approximate
measured-valued solution $\zeta_{t}^{N}$ to the PSPM as a linear combination
of Dirac measures,%
\[
\zeta_{t}^{N}\equiv\sum_{i=B}^{N}N_{i}(t)\delta_{X_{i}(t)}. 
\]
Each of the terms in the approximation can be interpreted biologically as a
cohort composed of $N_{i}$ individuals with average individual state (e.g.,
size) $X_{i}$ at time $t$. As individuals give rise to offspring with state $%
x_{b}$ at birth, we need different definitions for \emph{internal cohorts }%
and the \emph{boundary cohort}.

The internal cohorts are numbered $i=B+1,...,N$. These cohorts are chosen at
time $t=0$ so that $\zeta_{0}^{N}$ converges weakly to the initial data $%
u_{0}(x)dx$ as $N\rightarrow\infty$. This is always possible since finite
linear combinations of Dirac measures are dense in the weak topology \cite[%
Volume II, p. 214]{Bogachev2007}. Thus, we need not restrict ourselves to
initial data prescribed by a function $u_{0}(x),$ but can extend our
analysis to general positive Radon measures $\nu_{0}$. Without loss of
generality, we will assume that the total mass $\zeta_{0}^{N}([x_{b},%
\infty))=\nu_{0}([x_{b},\infty))$ for all $N$. The boundary cohort is the
cohort with the lowest index $B$. At time $t=0$, $B=0$ and we assume that $%
N_{0}(0)=0$ and $X_{0}(0)=x_{b}.$ As time progresses, additional cohorts
with negative index will be created through the process of internalization
described further below.

The dynamics of the internal cohorts are given by 
\begin{subequations}
\label{EBT:internal}
\begin{align}
\frac{dN_{i}}{dt} & =-\mu\left( X_{i},\zeta^{N}\right) N_{i}, \\
\frac{dX_{i}}{dt} & =g\left( X_{i},\zeta^{N}\right) ,
\end{align}
\end{subequations}
where we have assumed a direct dependence of the vital rates on the solution
measure, $\zeta^{N}=\zeta_{t}^{N},$ to represent environmental feedback.
Similarly, but in contrast to the original formulation of the EBT\ method by
de Roos \cite{Roos1988}, the dynamics of the boundary cohorts follow
\begin{subequations}
\label{EBT:boundary}
\begin{align}
\frac{dN_{B}}{dt} & =-\mu(X_{B},\zeta^{N})N_{B}+\sum_{i=B}^{N}\beta
(X_{i},\zeta^{N})N_{i,} \\
\frac{dX_{B}}{dt} & =g(X_{B},\zeta^{N}),
\end{align}
\end{subequations}
where the sum is taken over all cohorts including the boundary cohort. This
sum reflects the offspring produced by the total population. In line with
their biological interpretations, we henceforth assume that all vital rates,
the mortality rate $\mu$, the fecundity rate $\beta$, and the growth rate $g$%
, are non-negative.

With the EBT method defined as above, both the width and the number of
individuals in the boundary cohort will increase over time which eventually
introduces an unacceptably large approximation error. For this reason, the
boundary cohort must be \emph{internalized} sufficiently often. This implies
that the number of cohorts will increase following internalization. The new
boundary cohort is at the time $t$ of the internalization given by $%
N_{B}(t)=0$ and $X_{B}(t)=x_{b}$, where $B$ equals the index of the old
boundary cohort decremented one step. At the same instant, the previous
boundary cohort becomes an internal cohort. To prevent the number of
internal cohorts from exceeding computationally acceptable bounds, internal
cohorts may be removed when the number of individuals has declined
sufficiently. Removal of internal cohorts is important for numerical
implementation but will not be considered in this manuscript.

The EBT method differs from traditional numerical schemes in that it aims to
approximate the solution as a measure of point masses. Before we can discuss
the convergence of the EBT method, it is necessary to extend the classical
concept of a weak solution to measures. This extension builds on earlier
work by Gwiazda et al. \cite{Gwiazda2010} (see also \cite{Carrillo2012, Gwiazda2010b}) and Chapter 8 of the monograph
\cite{Bogachev2007}. We will work with the cone all finite positive Radon
measures denoted $\mathcal{M}_{+}(\Omega)$, where $\Omega$ is a metric space
consisting of all admissible individual states. In our presentation, we
assume $\Omega=[x_{b},\infty]$ and we think of $x\in\Omega$ as the size of
an individual. An important reason for working with finite Radon measures is
that their behavior at infinity is tightly controlled: for each $\epsilon>0$%
, there exists a compact set $K_{\epsilon}$ such that $\mu(\Omega\backslash
K_{\epsilon})<\epsilon$.

Since the EBT method approximates the true solution as a measure of point
masses, the natural mode of convergence on $\mathcal{M}_{+}(\Omega)$ is
\emph{weak convergence}\footnote{%
There are two natural notions of convergence on $\mathcal{M}_{+}(\Omega)$%
---strong convergence and weak convergence. Strong convergence is unsuitable
for our purposes as, for example, the sequence of Dirac measures $%
\delta_{1/n}$ does not converge to $\delta_{0}$ as $n\rightarrow\infty$ in
the strong topology.}:

\begin{definition}
A sequence of measures $\left\{ \mu_{k}\right\} $ on $\Omega$ converges
weakly to a measure $\mu$ if
\[
\int_{\Omega}\phi(x)\,d\mu_{k}(x)\rightarrow\int_{\Omega}\phi(x)\,d\mu(x),
\]
as $k\rightarrow\infty$ for all bounded continuous real functions $\phi$ on $%
\Omega$.
\end{definition}

The weak convergence defined above induces a topology associated with the
Kantorovich-Rubinstein metric:%
\[
\rho(\mu,\nu)=\sup\left\{ \int_{\Omega}\phi(x)\,d(\mu-\nu)\,\Big \vert%
\phi\in C_{0}^{\infty}(\mathbb{R}),\left\Vert \phi\right\Vert
_{W^{1,\infty}}\leq1\right\} , 
\]
in which $\left\Vert \phi\right\Vert _{W^{1,\infty}}=\left\Vert \phi
\right\Vert _{L^{\infty}}+\left\Vert \phi^{\prime}\right\Vert _{L^{\infty}}$%
. This is also known as the flat metric. With this metric, $\mathcal{M}%
_{+}(\Omega)$ is a complete metric space (see \cite[Def. 2.5]{Gwiazda2010}).

Analogously to weak convergence, we define weak continuity as follows:

\begin{definition}
A mapping $\zeta_{t}:\mathbb{R}_{+}\rightarrow\mathcal{M}_{+}(\Omega)$ is
weakly continuous in time if, for all bounded continuous real functions $\phi
$ on $\Omega$, 
\[
\int_{\Omega}\phi(x)d\zeta_{t}, 
\]
is continuous in the classical sense as a function of $t$.
\end{definition}

With these two topological notions in place, we are in position to define
measure-valued solutions to the PSPM\ \eqref{PSPM}:

\begin{definition}
A mapping $\zeta_{t}:[0,T]\rightarrow\mathcal{M}_{+}([0,\infty))$ is a weak
solution to \eqref{PSPM} up to time $T$ if $\zeta_{t}$ is weakly continuous
in time and 
\begin{align}
&
\int_{x_{b}}^{\infty}\phi(x,T)\,d\zeta_{T}(x)-\int_{x_{b}}^{\infty}\phi(x,0)%
\,d\nu_{0}(x)=  \nonumber \\
& \int_{0}^{T}\int_{x_{b}}^{\infty}\left( \frac{\partial\phi}{\partial t}%
(x,t)+g(x,\zeta_{t})\frac{\partial\phi}{\partial x}(x,t)-\mu(x,\zeta
_{t})\phi(x,t)\right) \,d\zeta_{t}(x)dt  \nonumber \\
& +\int_{0}^{T}\phi(x_{b},t)\int_{x_{b}}^{\infty}\beta(x^{\prime},\zeta
_{t})\,d\zeta_{t}(x^{\prime})\,d\zeta_{t}(x)\,dt,   \label{weak_solution}
\end{align}
for all $\phi\in C_{0}^{\infty}(\mathbb{R}_{+}\times\lbrack0,T]).$ Here, $%
\nu_{0}\in\mathcal{M}_{+}(\Omega)$ is the initial data at time $t=0$.
\end{definition}

\begin{remark}
The definition above was inspired by Gwiazda et al. \cite{Gwiazda2010}. We
differ in that we use smooth test functions, but note that these are dense
in the space $C^{1}\cap W^{1,\infty}$ used in \cite{Gwiazda2010}.
\end{remark}

\begin{remark}
The dependence on the environmental feedback variable $E$ in \eqref{PSPM} is
represented here by a direct dependence on the solution measure $\zeta_{t}$.
\end{remark}

In order to show the convergence of the EBT method, we will recast the
definition of a weak solution. Let $0\leq t_{1}<t_{2}\leq T$ and $v\in%
\mathcal{M}_{+}(\Omega)$. For a given test function $\phi\in$ $%
C_{0}^{\infty}(\mathbb{R}_{+}\times\lbrack0,T])$ and a family of measures $%
\sigma_{t}$, we define the residual 
\begin{align}
R_{\phi}(\sigma_{t}, & \nu,t_{1},t_{2})=\int_{x_{b}}^{\infty}\phi
(x,t_{2})\,d\sigma_{t_{2}}(x)-\int_{x_{b}}^{\infty}\phi(x,t_{1})\,d\nu (x)
\label{residual_weak_arbitrary_measure} \\
& -\int_{t_{1}}^{t_{2}}\int_{x_{b}}^{\infty}\left( \frac{\partial\phi }{%
\partial t}(x,t)+g(x,\zeta_{t})\ \dfrac{\partial\phi}{\partial x}%
(x,t)-\mu(x,\zeta_{t})\phi(x,t)\right) \,d\sigma_{t}(x)dt  \nonumber \\
& +\int_{t_{1}}^{t_{2}}\ \phi(x_{b},t)\left(
\int_{x_{b}}^{\infty}\beta(x^{\prime},\zeta_{t})\,d\sigma_{t}(x^{\prime})%
\right) \,dt,  \nonumber
\end{align}
where the measure $\nu$ is interpreted as the initial data at time $t=t_{1}$%
. Clearly, if $R_{\phi}(\sigma_{t},\nu_{0},0,T)=0$ for all test functions $%
\phi$ and the family of measures $\sigma_{t}$ is weakly continuous in time,
then $\sigma_{t}$ is a weak solution to \eqref{PSPM}. We will sometimes
write $R_{\phi}(\sigma_{t})$ meaning $R_{\phi}(\sigma_{t},\nu_{0},0,T)$.

\section{Convergence of the Escalator Boxcar Train\label{Section_3}}

\label{Section_2}

We establish the convergence of the EBT method in five steps: (1) At each
fixed time $t$, the sequence of approximating EBT measures contains a
subsequence which converges weakly to a positive Radon measure $\zeta_{t}$.
(2) We find a subsequence that for all $t$ converges weakly to a mapping $%
\zeta_{t}$ that is weakly continuous in time. (3) The residuals of the
approximating EBT measures $\zeta_{t}^{N}$ converges to the residual of $%
\zeta_{t}$ for any test function. (4) The residual of the approximating EBT
measures $\zeta_{t}^{N}$ converges to zero, and hence the measure $\zeta_{t}$
is a weak solution. All that remains is then to show that the entire
sequence of approximating EBT measures converges weakly to $\zeta_{t}$. We
do this by (5) assuming the existence of a unique weak solution to the
structured population model and showing that a contradiction will otherwise
result. In all the following lemmas, we assume that the birth rate, growth
rate, and mortality rate are non-negative, bounded, and Lipschitz continuous
functions of the individual size $x$. In addition, we need three assumption
pertaining to the feedback from the population-level to individual vital
rates: 
\begin{align*}
\sup_{x}\left\vert \beta(x,\sigma)-\beta(x,\lambda)\right\vert & \leq
C_{\beta}~\rho(\sigma,\lambda), \\
\sup_{x}\left\vert g(x,\sigma)-g(x,\lambda)\right\vert & \leq
C_{g}~\rho(\sigma,\lambda), \\
\sup_{x}\left\vert \mu(x,\sigma)-\mu(x,\lambda)\right\vert & \leq C_{\mu
}~\rho(\sigma,\lambda).
\end{align*}
The three requirements above assert Lipschitz continuity in $\mathcal{M}%
_{+}(\Omega)$ equipped with the Kantorovich-Rubinstein metric.

\begin{lemma}[Step 1]
\label{Lemma_step_1}For each $t\in\lbrack0,T]$, the sequence $%
\{\zeta_{t}^{N}\}$ of approximating EBT measures contains a weakly
convergent subsequence. In fact, any subsequence $\{\zeta_{t}^{N^{\prime}}\}$
of $\{\zeta_{t}^{N}\}$ contains a weakly convergent subsequence.
\end{lemma}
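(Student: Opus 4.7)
The plan is to invoke Prokhorov's theorem on the cone $\mathcal{M}_+(\Omega)$: a family of finite positive Radon measures is relatively compact in the weak topology precisely when it is uniformly bounded in total mass and uniformly tight. Accordingly, at each fixed $t \in [0,T]$ I would verify both properties for the family $\{\zeta_t^N\}_N$; the first sentence of the lemma follows, and the second sentence is automatic because the two properties are inherited by any subsequence.

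For the uniform mass bound I would sum the cohort equations \eqref{EBT:internal}--\eqref{EBT:boundary}. Writing $M^N(t) := \zeta_t^N(\Omega) = \sum_{i=B}^{N} N_i(t)$, the boundary and interior contributions combine to give
\[
\frac{dM^N}{dt} = \sum_{i=B}^{N}\bigl(\beta(X_i,\zeta^N) - \mu(X_i,\zeta^N)\bigr) N_i(t) \leq \|\beta\|_\infty M^N(t),
\]
since mortality only decreases mass. Using the normalization $\zeta_0^N(\Omega) = \nu_0(\Omega)$ assumed in the setup, Grönwall's inequality delivers the uniform bound $M^N(t) \leq \nu_0(\Omega)\, e^{\|\beta\|_\infty T}$ independent of $N$.

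For uniform tightness I would exploit that $g$ is bounded: every cohort satisfies $|\dot X_i| \leq \|g\|_\infty$, so $X_i(t) \leq X_i(s) + \|g\|_\infty T$ where $s$ is the cohort's creation time. Cohorts spawned by an internalization start at $x_b$ and are therefore trapped in the compact interval $[x_b,\, x_b + \|g\|_\infty T]$. For cohorts originally present at $t=0$ with $X_i(0) > K$, applying the same mass bound to this sub-family alone gives at most $\zeta_0^N((K,\infty))\, e^{\|\beta\|_\infty T}$ mass at time $t$. Since $\{\zeta_0^N\}$ converges weakly to the finite Radon measure $\nu_0$, it is itself tight, so $\zeta_0^N((K,\infty))$ can be made uniformly small by choosing $K$ large. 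Combining the two pieces exhibits a single compact set $K_\epsilon = [x_b,\, K + \|g\|_\infty T]$ that carries all but $\epsilon$ of the mass of every $\zeta_t^N$.

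The main obstacle I anticipate is the bookkeeping through internalization events: the index set $\{B,\ldots,N\}$ drifts in time and a given interior cohort may have been produced by an arbitrarily long chain of internalizations, so one must verify that such a cohort still satisfies the position bound derived above and that the sub-family "originating outside $[x_b,K]$" is genuinely closed under the EBT dynamics (no inflow from birth, only decay and transport). Once these two estimates are in hand, Prokhorov's theorem produces the weakly convergent subsequence, and the passage to an arbitrary subsequence $\{\zeta_t^{N'}\}$ requires no additional argument.
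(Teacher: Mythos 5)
Your proposal is correct and follows essentially the same route as the paper's proof: Prohorov's theorem, a Gr\"onwall bound on the total mass $\sum_i N_i(t)$ using the boundedness of $\beta$ and the $N$-independent initial mass, and uniform tightness obtained from the bound $X_i(t)\leq X_i(0)+g_{\sup}T$ together with tightness of the weakly convergent initial measures $\zeta_0^N$, the whole argument passing unchanged to any subsequence. The internalization bookkeeping you flag is resolved exactly as you suggest (new boundary cohorts start at $x_b$ with zero mass, the total mass is continuous across internalizations, and interior cohorts receive no birth inflow), which is also how the paper dispenses with it.
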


\begin{proof}
By Prohorov's Theorem \cite{Bogachev2007}, it is enough to show that the
sequence $\{\zeta_{t}^{N}\}$ is uniformly bounded in the variation norm and
is uniformly tight. As the measures are positive by construction, this
amounts to showing that $\zeta_{t}^{N}([x_{b},\infty))$ is uniformly bounded
in $N$, with $\lim_{M\rightarrow\infty}\sup_{M}\zeta_{t}^{N}((M,\infty))=0$.
An biological interpretation of these requirements, which we will build on
in the proof, is that the abundance and typical size of individuals in the
population are bounded from above. Letting $P_{N}(s)=\zeta_{s}^{N}([x_{b},%
\infty))$ it follows that%
\begin{align*}
P_{N}^{\prime}(s) &
=\sum_{i=B}^{N}N_{i}^{\prime}(s)=\sum_{i=B}^{N}\beta(X_{i},%
\zeta_{s}^{N})N_{i}(s)-\sum_{i=B}^{N}\mu\left( X_{i},\zeta _{s}^{N}\right)
N_{i}(s)\leq \\
&
\leq\sum_{i=B}^{N}\beta(X_{i},\zeta_{s}^{N})N_{i}(s)\leq\beta_{\sup}%
\sum_{i=B}^{N}N_{i}(s)=\beta_{\sup}P_{N}(s),
\end{align*}
where $\beta_{\sup}$ is the supremum of $\beta$, i.e., the maximum
individual birth rate. The above inequality holds for all $s\in\lbrack0,T]$
except at the finite number of times, where boundary cohorts are
internalized. At these points, the function $P_{N}$ is continuous. Thus, $%
0\leq P_{N}(t)\leq P_{N}(0)\exp(\beta_{\sup}T)$. Hence $P_{N}(t)=%
\zeta_{t}^{N}([x_{b},\infty))$ is uniformly bounded on $[0,T]$, since $%
P_{N}(0)$ is independent of $N$. (Recall that in Sect.~\ref{Section_2} we
assumed that the initial mass should be independent of $N$ and equal to that
of the population measure given as initial condition.)

To prove $\lim_{M\rightarrow\infty}\sup_{N}\zeta_{t}^{N}(~(M,\infty)~)=0$,
we first show that the statement is true for $t=0$. Let $\varepsilon>0$ be
given. Since the initial data $\nu_{0}$ is a positive Radon measure and thus
tightly controlled at infinity, we may choose $M_{1}$ large enough such that 
$\nu _{0}(~(M_{1},\infty)~)<\varepsilon/2.$ Pick any continuos function $%
\varphi$ on $[x_{b},\infty)$ satisfying $0\leq\varphi(x)\leq1$ with $%
\varphi(x)=1$ for $x>M_{1}+1$ and $\varphi(x)=0$ for $x<M_{1}.$ Then 
\[
\zeta_{0}^{N}([M_{1}+1,\infty))\leq\int_{M_{1}}^{\infty}\varphi~d%
\zeta_{0}^{N}<\int_{M_{1}}^{\infty}\varphi~d\nu_{0}+\varepsilon/2<%
\varepsilon, 
\]
if we choose $N>N_{0}$ for some sufficiently large $N_{0},$ since $\zeta
_{0}^{N}$ converges weakly to $\nu_{0}$ as $N\rightarrow\infty.$ To account
for the measures with $N\leq N_{0,}$ we choose $M_{2}$ so large that $%
\zeta_{0}^{N}([M_{2},\infty))<\varepsilon$ for $N=1,2,...,N_{0}$. Finally,
we choose $M\ $as the largest of the two numbers $M_{1}+1$ and $M_{2}$.

To prove the statement for a general time $t\in\lbrack0,T]$, we first note
that the center of mass and abundance at time $t$ of any internal cohort $i>0
$ with $X_{i}(t)$ large enough can be estimated with their respective values
at time $t=0$. Specifically, $X_{i}(t)\leq X_{i}(0)+tg_{\sup},$ where $%
g_{\sup}$ is the supremum of the growth rate $g$, and $N_{i}(t)\leq N_{i}(0).
$ Combining these two estimates, we have that $\zeta_{t}^{N}(M,\infty)\leq%
\zeta_{0}^{N}(M-tg_{\sup},\infty)$ and the first assertion of the lemma
follows. Finally we note that the above argument holds for any subsequence
of $\{\zeta_{t}^{N}\}$. This concludes the proof.
\end{proof}

\begin{lemma}[Step 2]
The approximating EBT sequence $\{\zeta_{t}^{N}\}$ contains a subsequence
which, for each $t\in\lbrack0,T]$, converges weakly to a positive finite
measure $\zeta_{t}$. The mapping $\zeta_{t}:[0,T]\rightarrow \mathcal{M}%
_{+}(\Omega)$ is weakly continuous in time.
\end{lemma}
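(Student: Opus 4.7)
The plan is to combine the pointwise compactness provided by Lemma~\ref{Lemma_step_1} with a uniform-in-$N$ equicontinuity estimate for the curves $t \mapsto \zeta_t^N$ in the flat metric, and then run an Arzel\`a--Ascoli / diagonal extraction argument to obtain a single subsequence that converges weakly at every $t \in [0,T]$ to a weakly continuous limit.

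The core step is the following equicontinuity estimate: there exists a constant $C > 0$, independent of $N$, such that
\[
\rho(\zeta_s^N,\zeta_t^N) \leq C\,|t-s| \qquad \text{for all } s,t \in [0,T].
\]
To prove it, fix a test function $\phi \in C_0^\infty(\mathbb{R})$ with $\|\phi\|_{W^{1,\infty}} \leq 1$. Away from the finitely many internalization times, the EBT dynamics \eqref{EBT:internal}--\eqref{EBT:boundary} give
\[
\frac{d}{dt}\int \phi\, d\zeta_t^N = \sum_{i=B}^N \Bigl(-\mu(X_i,\zeta_t^N)\phi(X_i) + g(X_i,\zeta_t^N)\phi'(X_i)\Bigr)N_i + \phi(X_B)\sum_{i=B}^N \beta(X_i,\zeta_t^N)N_i,
\]
whose absolute value is bounded by $(\mu_{\sup} + g_{\sup} + \beta_{\sup})\,P_N(t)$. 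By Lemma~\ref{Lemma_step_1}, $P_N(t)$ is uniformly bounded on $[0,T]$, and $t \mapsto \int \phi\,d\zeta_t^N$ is continuous across internalization events because the measure $\zeta_t^N$ itself does not jump: only the labelling of the cohorts changes when the boundary cohort is internalized. Integrating in time and taking the supremum over admissible $\phi$ yields the Lipschitz bound.

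Next I would fix a countable dense subset $D = \{t_k\}_{k\ge 1} \subset [0,T]$ (e.g.\ dyadic rationals together with $0$ and $T$), apply Lemma~\ref{Lemma_step_1} successively at each $t_k$, and extract by a Cantor diagonal procedure a subsequence $\{N_j\}$ such that $\zeta_{t_k}^{N_j}$ converges weakly to some $\zeta_{t_k} \in \mathcal{M}_+(\Omega)$ for every $t_k \in D$. For an arbitrary $t \in [0,T]$, pick $s_\ell \in D$ with $s_\ell \to t$; the Lipschitz estimate applied to $\zeta_{s_\ell}^{N_j}$ and passed to the $j \to \infty$ limit shows that $\{\zeta_{s_\ell}\}_\ell$ is Cauchy in the flat metric, hence converges by completeness of $(\mathcal{M}_+(\Omega),\rho)$ to some $\zeta_t$. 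The three-term estimate
\[
\rho(\zeta_t^{N_j},\zeta_t) \leq \rho(\zeta_t^{N_j},\zeta_{s_\ell}^{N_j}) + \rho(\zeta_{s_\ell}^{N_j},\zeta_{s_\ell}) + \rho(\zeta_{s_\ell},\zeta_t),
\]
with the outer two terms controlled by equicontinuity and the middle term by the diagonal choice, then shows $\zeta_t^{N_j} \to \zeta_t$ in the flat metric, and hence weakly since the sequence is uniformly tight and uniformly bounded by Lemma~\ref{Lemma_step_1}. The Lipschitz estimate passes to the limit, giving $\rho(\zeta_s,\zeta_t) \leq C|t-s|$, which in turn yields weak continuity of $t \mapsto \zeta_t$.

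The main obstacle is the equicontinuity estimate itself: one must justify differentiating a sum of point masses paired against a $W^{1,\infty}$ test function and confirm that the resulting bound is uniform in $N$, which relies on the uniform mass bound from Lemma~\ref{Lemma_step_1} together with the essentially bookkeeping observation that internalization preserves continuity of $\int \phi\,d\zeta_t^N$. Once this is in hand, the diagonal and Arzel\`a--Ascoli steps are routine.
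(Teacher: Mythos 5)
Your proposal is correct and takes essentially the same route as the paper: a Cantor diagonal extraction over a countable dense set of times, combined with a uniform-in-$N$ Lipschitz-in-time estimate (coming, as you note, from the boundedness of $\beta$, $g$, $\mu$, the uniform mass bound of Lemma~\ref{Lemma_step_1}, and continuity of the measure across internalizations) and a three-term triangle inequality to reach all $t\in[0,T]$ and obtain weak continuity of the limit. The only difference is technical rather than conceptual: you identify the limit at non-distinguished times via completeness of $(\mathcal{M}_{+}(\Omega),\rho)$ plus uniform tightness, whereas the paper argues test function by test function and invokes H\"ormander's results on positive distributions to recognize the limit as a positive Radon measure; both are sound.
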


\begin{proof}
Let $\left\{ q_{k}\right\} _{k=1}^{\infty}$ be an enumeration of the
rational numbers in $[0,T]$. According to Lemma \ref{Lemma_step_1} there
exists a convergent subsequence $\{\zeta_{q_{1}}^{N_{j}^{1}}\}$ of $%
\{\zeta_{q_{1}}^{N}\}$. Repeating this argument, there exists a convergent
subsequence $\{\zeta_{q_{2}}^{N_{j}^{2}}\}$ of $\{\zeta_{q_{2}}^{N_{j}^{1}}\}
$. Proceeding by induction, we obtain for each $k$ a sequence $\{\zeta
_{q_{k}}^{N_{j}^{k}}\}$ which converges weakly to $\zeta_{q_{k}}$ and is a
subsequence of all preceding sequences. Inspired by Cantor's diagonalization
argument we define the sequence $\hat{\zeta}_{t}^{k}:=\zeta_{t}^{N_{k}^{k}}$%
. It follows that for each rational $t\in\lbrack0,T]$, this sequence
converges weakly to a measure $\zeta_{t}$.

We will now show that the subsequence also converges to a positive finite
Radon measure for all real $t\in\lbrack0,T]$. We first show that for each
fixed test function $\phi\in C^{\infty}(\mathbb{R})$ and each time $t$, the
sequence of real numbers 
\begin{equation}
\int_{x_{b}}^{\infty}\phi~d\hat{\zeta}_{t}^{k}, 
\label{convergence_in_distributtions}
\end{equation}
converges as $k\rightarrow\infty$. It then follows from classical results in
the theory of distributions, e.g., \cite[Theorem 2.1.8 and Theorem 2.1.9]%
{Hormander_1990_book}, that $\hat{\zeta}_{t}^{k}$ converges weakly to a
positive measure $\zeta_{t}$. This will turn out to be the desired measure.

To prove convergence of the sequence (\ref{convergence_in_distributtions}),
we first note that for fixed $k$, the measure $\hat{\zeta}_{t}^{k}$ is
weakly continuous in time since each $N_{i}(.)$ and $X_{i}(.)$ are
continuous functions. Let $t\in\lbrack0,T]$ and $\phi$ be a test function.
Given $\varepsilon>0$ we get%
\begin{gather*}
\left\vert \int_{x_{b}}^{\infty}\phi~d\hat{\zeta}_{t}^{j}-\int_{x_{b}}^{%
\infty}\phi~d\hat{\zeta}_{t}^{k}\right\vert \leq \\
\leq\left\vert \int_{x_{b}}^{\infty}\phi~d\hat{\zeta}_{t}^{j}-\int_{x_{b}}^{%
\infty}\phi~d\hat{\zeta}_{q}^{j}\right\vert +\left\vert
\int_{x_{b}}^{\infty}\phi~d\hat{\zeta}_{q}^{j}-\int_{x_{b}}^{\infty}\phi~d%
\hat{\zeta}_{q}^{k}\right\vert +\left\vert \int_{x_{b}}^{\infty}\phi~d\hat{%
\zeta}_{q}^{k}-\int_{x_{b}}^{\infty}\phi~d\hat{\zeta}_{t}^{k}\right\vert ,
\end{gather*}
for any $j$, $k$, and $q$. Noting that the birth rate and mortality rate are
bounded, we can use the same argument as in the proof of Lemma \ref%
{Lemma_step_1} to show that the first and last term above are bounded by a
constant multiple of $\left\vert t-q\right\vert $. In particular, this
constant depends on neither $j$ nor $k$. Choosing $q$ as a rational number
sufficiently close to $t$ these two terms will be smaller than $\varepsilon
/2$. Finally, since $q$ is rational, we may choose $j$ and $k$ large enough
to make the middle term less than $\varepsilon/2$. Thus, we have established
the Cauchy property for the sequence (\ref{convergence_in_distributtions}),
which hence converges for all test functions $\phi$. This shows that $\hat{%
\zeta }_{t}^{j}$ converges weakly to a bounded positive Radon measure $\hat{%
\zeta }_{t}$ for all $t\in\lbrack0,T]$.

Using the same idea as above, we see that $\hat{\zeta}_{t}$ is weakly
continuous in time. Specifically,%
\begin{gather*}
\left\vert \int_{x_{b}}^{\infty}\phi~d\hat{\zeta}_{s}-\int_{x_{b}}^{\infty
}\phi~d\hat{\zeta}_{t}\right\vert \leq \\
\leq\left\vert \int_{x_{b}}^{\infty}\phi~d\hat{\zeta}_{s}-\int_{x_{b}}^{%
\infty}\phi~d\hat{\zeta}_{s}^{k}\right\vert +\left\vert
\int_{x_{b}}^{\infty}\phi~d\hat{\zeta}_{s}^{k}-\int_{x_{b}}^{\infty}\phi~d%
\hat{\zeta}_{t}^{k}\right\vert +\left\vert \int_{x_{b}}^{\infty}\phi~d\hat{%
\zeta}_{t}^{k}-\int_{x_{b}}^{\infty}\phi~d\hat{\zeta}_{t}\right\vert ,
\end{gather*}
where again the middle term is bounded by a constant multiple of $\left\vert
t-s\right\vert $ independent of $k$. Finally, the first and last term can be
made arbitrarily small as a consequence of the weak convergence of $\hat {%
\zeta}_{s}^{k}$ to $\hat{\zeta}_{s}$.
\end{proof}

\begin{lemma}
\label{continuous_function_integral}Assume that the sequence $\zeta_{t}^{k}$
converges weakly to a finite Radon measure $\zeta_{t}$. If $\varphi\in
C_{0}^{\infty}(\mathbb{R}_{+}\times\lbrack0,T])$ then, for every bounded
Lipschitz continuous function $f$ satisfying 
\[
\sup_{x}\left\vert f(x,\sigma)-f(x,\lambda)\right\vert \leq C_{f}~\rho
(\sigma,\lambda), 
\]
for all $\sigma,\lambda\in\mathcal{M}_{+}(\Omega)$, we get%
\[
\int_{0}^{T}\int_{x_{b}}^{\infty}\varphi(x,t)f(x,\zeta_{t}^{k})\,d\zeta
_{t}^{k}(x)dt\rightarrow\int_{0}^{T}\int_{x_{b}}^{\infty}\varphi
(x,t)f(x,\zeta_{t})\,d\zeta_{t}(x)dt, 
\]
as $k$ tends to infinity.
\end{lemma}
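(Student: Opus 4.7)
The plan is to split the difference by adding and subtracting the ``mixed'' integrand $\varphi(x,t)\,f(x,\zeta_t)$ integrated against $\zeta_t^k$. Writing
\begin{align*}
& \int_0^T \!\!\int_{x_b}^\infty \varphi(x,t)\bigl(f(x,\zeta_t^k)\,d\zeta_t^k(x) - f(x,\zeta_t)\,d\zeta_t(x)\bigr)\,dt \\
&\quad = \int_0^T \!\!\int_{x_b}^\infty \varphi(x,t)\bigl(f(x,\zeta_t^k) - f(x,\zeta_t)\bigr)\,d\zeta_t^k(x)\,dt \\
&\qquad{}+ \int_0^T \!\!\int_{x_b}^\infty \varphi(x,t)\,f(x,\zeta_t)\,\bigl(d\zeta_t^k(x)-d\zeta_t(x)\bigr)\,dt,
\end{align*}
I would handle the two terms separately and then pass to the limit by bounded convergence in the $t$-integral. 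A preliminary observation I would use throughout is that, by Lemma~\ref{Lemma_step_1}, the total masses $\zeta_t^k([x_b,\infty))$ are bounded uniformly in $k$ and $t$ by some constant $M$; since weak convergence preserves mass bounds, the same bound holds for $\zeta_t$.

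For the first summand, the Lipschitz hypothesis on $f$ gives the pointwise (in $t$) estimate
\[
\left|\int_{x_b}^\infty \varphi(x,t)\bigl(f(x,\zeta_t^k)-f(x,\zeta_t)\bigr)\,d\zeta_t^k(x)\right| \leq \|\varphi\|_\infty\,C_f\,\rho(\zeta_t^k,\zeta_t)\,M.
\]
The key fact I would invoke here is that, since weak convergence on $\mathcal{M}_+(\Omega)$ is metrized by the Kantorovich-Rubinstein metric $\rho$ (as stated in the excerpt), weak convergence $\zeta_t^k\rightharpoonup\zeta_t$ at each fixed $t$ is equivalent to $\rho(\zeta_t^k,\zeta_t)\to 0$. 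The integrand in $t$ is dominated by the constant $2\|\varphi\|_\infty C_f M^2$, so dominated convergence sends the first summand to zero.

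For the second summand, fix $t\in[0,T]$ and consider the function $x\mapsto\varphi(x,t)\,f(x,\zeta_t)$. It is bounded (by $\|\varphi\|_\infty\|f\|_\infty$) and continuous in $x$ (since $\varphi$ is smooth and $f(\cdot,\zeta_t)$ is Lipschitz in $x$ by hypothesis), so by the definition of weak convergence
\[
\int_{x_b}^\infty \varphi(x,t)\,f(x,\zeta_t)\,d\zeta_t^k(x) \longrightarrow \int_{x_b}^\infty \varphi(x,t)\,f(x,\zeta_t)\,d\zeta_t(x),
\]
pointwise in $t$. Again the $t$-integrand is uniformly dominated by $\|\varphi\|_\infty\|f\|_\infty M$, so dominated convergence finishes the argument.

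The main subtlety I would need to justify carefully is the step that upgrades pointwise weak convergence $\zeta_t^k\rightharpoonup\zeta_t$ into convergence in the flat metric $\rho(\zeta_t^k,\zeta_t)\to 0$, since the two notions of convergence, while inducing the same topology on finite positive Radon measures under tight uniform mass bounds, are defined by different classes of test functions (general bounded continuous versus smooth compactly supported with bounded $W^{1,\infty}$ norm). The tightness and uniform mass bound supplied by Lemma~\ref{Lemma_step_1} are exactly what makes this upgrade legitimate. Everything else—the measurability in $t$ of the integrands, and the dominated convergence application—is routine given that the EBT measures $\zeta_t^k$ are explicit finite sums of Diracs moving continuously in $t$.
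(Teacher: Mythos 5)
Your proposal is correct and follows essentially the same route as the paper: the same add-and-subtract decomposition, the same pointwise-in-$t$ bound $\|\varphi\|_{\infty}C_{f}\,\rho(\zeta_{t}^{k},\zeta_{t})\,M$ for the Lipschitz-in-measure term, and dominated convergence in $t$; the fact you single out as the main subtlety (that weak convergence at each fixed $t$ yields $\rho(\zeta_{t}^{k},\zeta_{t})\rightarrow0$ together with uniformly bounded masses) is precisely what the paper imports from Gwiazda et al., Theorem 2.7. The only minor difference is that for the term $\int\varphi(\cdot,t)f(\cdot,\zeta_{t})\,d(\zeta_{t}^{k}-\zeta_{t})$ you apply the definition of weak convergence directly to the bounded continuous integrand, whereas the paper approximates this bounded Lipschitz function by smooth functions in the $W^{1,\infty}$ norm and estimates via the Kantorovich--Rubinstein metric; both arguments are valid and rest on the same ingredients.
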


\begin{proof}
We have%
\begin{align}
\int_{x_{b}}^{\infty}\varphi(x,t)f(x,\zeta_{t}^{k})\,d\zeta_{t}^{k}(x) &
=\int_{x_{b}}^{\infty}\varphi(x,t)f(x,\zeta_{t})\,d\zeta_{t}^{k}(x)+
\label{integration_of_test_functions2} \\
& +\int_{x_{b}}^{\infty}\varphi(x,t)\left( f(x,\zeta_{t}^{k})-f(x,\zeta
_{t})\right) \,d\zeta_{t}^{k}(x).  \nonumber
\end{align}
In the first term on the right hand side, the function $\varphi(x,t)f(x,%
\zeta _{t})$ is bounded and Lipschitz continuous in $x.$ Hence, it can be
approximated by a sequence $\{\varphi_{m}\}$ of functions in $C_{0}^{\infty
}(\mathbb{R}_{+}\times\lbrack0,T])$ that converges pointwise and in $%
W^{1,\infty}$-norm. Such a sequence can, for example, be constructed through
convolution. As the first term would vanish if $\left\Vert \varphi
(\cdot,t)f(\cdot,\zeta_{t})\right\Vert _{W^{1,\infty}}=0$, we can assume
that this is not the case. We then get%
\begin{align*}
\int_{x_{b}}^{\infty}\varphi(x,t)f(x,\zeta_{t})\,d\zeta_{t}^{k}(x) &
=\int_{x_{b}}^{\infty}\lim_{m\rightarrow\infty}\varphi_{m}(x,t)\,d\zeta
_{t}^{k}(x)= \\
& =\lim_{m\rightarrow\infty}\int_{x_{b}}^{\infty}\varphi_{m}(x,t)\,d\zeta
_{t}^{k}(x),
\end{align*}
where we have used Lebesgue's dominated convergence theorem. Hence,%
\begin{gather*}
\left\vert
\int_{x_{b}}^{\infty}\varphi(x,t)f(x,\zeta_{t})\,d\zeta_{t}^{k}(x)-%
\int_{x_{b}}^{\infty}\varphi(x,t)f(x,\zeta_{t})\,d\zeta_{t}(x)\right\vert =
\\
=\lim_{m\rightarrow\infty}\left\vert \int_{x_{b}}^{\infty}\varphi
_{m}(x,t)\,d(\zeta_{t}^{k}-\zeta_{t})(x)\right\vert = \\
\leq\lim_{m\rightarrow\infty}\left\Vert \varphi_{m}(.,t)\right\Vert
_{W^{1,\infty}}\rho(\zeta_{t}^{k}-\zeta_{t})= \\
=\left\Vert \varphi(.,t)f(.,\zeta_{t})\right\Vert _{W^{1,\infty}}\rho
(\zeta_{t}^{k}-\zeta_{t})\rightarrow0,
\end{gather*}
as $k$ tends to infinity. Thus the first term converges to 
\[
\int_{x_{b}}^{\infty}\varphi(x,t)f(x,\zeta_{t})\,d\zeta_{t}(x). 
\]
It remains to show that the second term in %
\eqref{integration_of_test_functions2} vanishes as $k\rightarrow\infty$,%
\begin{gather*}
\left\vert \int_{x_{b}}^{\infty}\varphi(x,t)\left(
f(x,\zeta_{t}^{k})-f(x,\zeta_{t})\right) \,d\zeta_{t}^{k}(x)\right\vert \leq
\\
\leq\sup_{x}\left\vert \varphi(x,t)\left( f(x,\zeta_{t}^{k})-f(x,\zeta
_{t})\right) \,\right\vert \zeta_{t}^{k}([x_{b},\infty))\leq \\
\leq\sup_{x}\left\vert \varphi(x,t)\right\vert \sup_{x}\left\vert
f(x,\zeta_{t}^{k})-f(x,\zeta_{t})\right\vert
\,\zeta_{t}^{k}([x_{b},\infty))\leq \\
\leq
C_{\varphi}C_{f}~\rho(\zeta_{t}^{k},\zeta_{t})\,\zeta_{t}^{k}([x_{b},%
\infty)).
\end{gather*}
Since $\zeta_{t}^{k}$ converges weakly to $\zeta_{t},$ it follows from
Gwiazda et al. \cite[Theorem 2.7]{Gwiazda2010} that $\,\zeta_{t}^{k}([x_{b},%
\infty))$ is uniformly bounded and $\rho(\zeta_{t}^{k},\zeta_{t})$ tends to
zero as $k$ tends to infinity. Since the above calculation is done pointwise
in $t$, the lemma follows from Lebesgue's dominated convergence theorem.
\end{proof}

\begin{lemma}[Step 3]
\label{Step3}Assume that the sequence $\zeta_{t}^{k}$ converges weakly to
the finite Radon measure $\zeta_{t}$. Then the residual $R_{\phi}(\zeta
_{t}^{k})$ converges to $R_{\phi}(\zeta_{t})$ for all test functions $%
\phi\in C_{0}^{\infty}(\mathbb{R}_{+}\times\lbrack0,T])$.
\end{lemma}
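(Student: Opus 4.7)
The plan is to verify convergence of each of the four groups of terms in the residual $R_\phi(\zeta_t^k)$ separately. The initial-data piece $\int_{x_b}^\infty \phi(x,0)\,d\nu_0(x)$ is independent of $k$ and requires no argument. The terminal-time piece $\int_{x_b}^\infty \phi(x,T)\,d\zeta_T^k(x)$ converges to $\int_{x_b}^\infty \phi(x,T)\,d\zeta_T(x)$ directly from the assumed weak convergence $\zeta_T^k \rightharpoonup \zeta_T$, since $\phi(\cdot,T)$ is smooth and compactly supported, hence bounded continuous.

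For the three summands inside the double integral against $d\zeta_t^k(x)\,dt$, I would apply Lemma \ref{continuous_function_integral} term by term. The time-derivative term falls out by taking $f \equiv 1$ (which trivially satisfies the Lipschitz-in-measure bound with constant zero) and $\varphi = \partial\phi/\partial t$. The growth term is handled with $f = g$ and $\varphi = \partial\phi/\partial x$, using that $g$ is bounded, Lipschitz in $x$, and $C_g$-Lipschitz in $\mathcal{M}_+(\Omega)$ by the standing assumptions. The mortality term is entirely analogous with $f = \mu$ and $\varphi = \phi$.

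The birth term $\int_0^T \phi(x_b,t)\int_{x_b}^\infty \beta(x',\zeta_t^k)\,d\zeta_t^k(x')\,dt$ requires a short direct argument, since the outer factor $\phi(x_b,t)$ does not depend on $x'$ and so the combined test function is not compactly supported in $x'$, which prevents a direct appeal to Lemma \ref{continuous_function_integral}. I would split
\[
\beta(x',\zeta_t^k) = \beta(x',\zeta_t) + \bigl[\beta(x',\zeta_t^k) - \beta(x',\zeta_t)\bigr].
\]
For each fixed $t$, the first summand gives $\int \beta(x',\zeta_t)\,d\zeta_t^k(x') \to \int \beta(x',\zeta_t)\,d\zeta_t(x')$ by the assumed weak convergence, since $\beta(\cdot,\zeta_t)$ is bounded and continuous in $x'$. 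The second summand is bounded in absolute value by $C_\beta\,\rho(\zeta_t^k,\zeta_t)\,\zeta_t^k([x_b,\infty))$, which tends to zero pointwise in $t$ by Theorem 2.7 of Gwiazda et al.\ combined with the uniform mass bound supplied by Lemma \ref{Lemma_step_1}. Multiplying through by the bounded factor $\phi(x_b,t)$ and invoking dominated convergence on the finite interval $[0,T]$ then closes the argument.

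I do not expect a serious obstacle. The one point worth flagging is the birth term, which does not fit the template of Lemma \ref{continuous_function_integral} verbatim; one must either carry out the short direct estimate sketched above, or multiply $\phi(x_b,t)$ by a smooth spatial cutoff, use the uniform-in-$t$ tightness of $\{\zeta_t^k\}$ from Lemma \ref{Lemma_step_1} to control the tail of the integrals, and then invoke the lemma.
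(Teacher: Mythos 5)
Your proposal is correct and follows essentially the same route as the paper: decompose the residual into the terminal-time, initial-data, transport/mortality, and birth terms, use weak convergence for the first, note the second is $k$-independent, and invoke Lemma~\ref{continuous_function_integral} for the integrals against $d\zeta_t^k\,dt$. Your separate direct estimate for the birth term is in fact slightly more careful than the paper, which cites Lemma~\ref{continuous_function_integral} for that term as well even though its test function $\phi(x_b,t)$ is not compactly supported in $x'$; your splitting of $\beta(x',\zeta_t^k)$ plus dominated convergence (or the cutoff-and-tightness alternative you flag) closes that small gap.
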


\begin{proof}
Consider%
\begin{align}
R_{\phi}(\zeta_{t}^{k})= &
\int_{x_{b}}^{\infty}\phi(x,T)\,d\zeta_{t}^{k}(x)-\int_{x_{b}}^{\infty}%
\phi(x,0)\,d\nu_{0}(x) \\
& -\int_{0}^{T}\int_{x_{b}}^{\infty}\left( \frac{\partial\phi}{\partial t}%
(x,t)+g(x,\zeta_{t}^{k})\ \frac{\partial\phi}{\partial x}(x,t)-\mu
(x,\zeta_{t}^{k})\phi(t,x)\right) \,d\zeta_{t}^{k}(x)dt  \nonumber \\
+ & \int_{0}^{T}\phi(x_{b},t)\left( \int_{x_{b}}^{\infty}\beta(x^{\prime
},\zeta_{t}^{k})\,d\zeta_{t}^{k}(x^{\prime})\right) \,dt=I-II-III+IV. 
\nonumber
\end{align}
The first term converges by definition of weak convergence and the second
term is unchanged. The third and fourth term converge by Lemma \ref%
{continuous_function_integral}.
\end{proof}

\begin{lemma}
\label{Residual_without_internalization}Let $0\leq t_{1}<t_{2}\leq T$ and $%
v\in\mathcal{M}_{+}(\Omega)$. Assuming that no internalization is done in
the interval $(t_{1},t_{2})$, then for any test function $\phi$ we have that 
\begin{align*}
R_{\phi}(\zeta_{t}^{N},\nu,t_{1},t_{2}) &
=\sum_{i=B}^{N}N_{i}(t_{1})\phi(X_{i}(t_{1}),t_{1})-\int_{x_{b}}^{\infty}%
\phi(x,t_{1})\,d\nu(x)+ \\
&
+\int_{t_{1}}^{t_{2}}(\phi(X_{B}(t),t)-\phi(x_{b},t))\sum_{i=B}^{N}%
\beta(X_{i}(t),\zeta_{t}^{N})\ N_{i}(t)dt,
\end{align*}
where the sum is taken over all cohorts, including the boundary cohort.
\end{lemma}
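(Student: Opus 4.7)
The plan is a direct computation that exploits the Dirac-sum form $\zeta_t^N=\sum_{i=B}^N N_i(t)\delta_{X_i(t)}$. First I would substitute this representation into the definition of the residual \eqref{residual_weak_arbitrary_measure}: every spatial integral against $\zeta_t^N$ collapses to a finite sum of evaluations at the cohort centres $X_i(t)$. The remaining task is to collapse the time integral into an endpoint contribution, which is achieved by applying the fundamental theorem of calculus to $F_i(t):=N_i(t)\phi(X_i(t),t)$.

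For an internal cohort ($i>B$) the chain rule together with \eqref{EBT:internal} gives
\[
F_i'(t)=N_i(t)\!\left(\frac{\partial\phi}{\partial t}(X_i(t),t)+g(X_i,\zeta_t^N)\frac{\partial\phi}{\partial x}(X_i(t),t)-\mu(X_i,\zeta_t^N)\phi(X_i(t),t)\right),
\]
while for the boundary cohort the inflow term in \eqref{EBT:boundary} produces the additional contribution $\phi(X_B(t),t)\sum_{i=B}^N\beta(X_i,\zeta_t^N)N_i(t)$. The hypothesis that no internalization occurs in $(t_1,t_2)$ enters precisely here: it guarantees that the index $B$ is constant throughout the interval and that each pair $(N_i,X_i)$ is $C^1$ on $[t_1,t_2]$, so no jumps or reindexing corrections appear. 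Summing over $i$ and integrating $F_i'$ from $t_1$ to $t_2$ then yields an identity relating $\sum_i N_i(t_2)\phi(X_i(t_2),t_2)$ and $\sum_i N_i(t_1)\phi(X_i(t_1),t_1)$ to the drift integral $\int_{t_1}^{t_2}\!\int(\partial_t\phi+g\partial_x\phi-\mu\phi)\,d\zeta_t^N\,dt$ and the extra term $\int_{t_1}^{t_2}\phi(X_B(t),t)\sum_{i=B}^N\beta(X_i,\zeta_t^N)N_i(t)\,dt$.

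Inserting this identity into \eqref{residual_weak_arbitrary_measure} cancels the drift integral against its counterpart in the residual, leaves $\sum_i N_i(t_1)\phi(X_i(t_1),t_1)-\int\phi(x,t_1)\,d\nu(x)$ as the first two terms of the claimed formula, and combines the boundary-cohort contribution with the $\phi(x_b,t)\sum\beta N_i$ term already present in \eqref{residual_weak_arbitrary_measure} to produce the factor $\phi(X_B(t),t)-\phi(x_b,t)$ in the final integrand. The lemma is ultimately a pathwise algebraic identity, so no genuine analytical difficulty arises; the only point requiring care is the separate bookkeeping for the boundary cohort, whose ODE differs from those of the internal cohorts, together with a consistent tracking of the sign conventions in \eqref{residual_weak_arbitrary_measure}. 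This is the sole potential pitfall, and it is purely notational.
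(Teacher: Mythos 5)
Your proposal is correct and follows essentially the same route as the paper's own proof: substitute the Dirac-sum form of $\zeta_t^N$, recognize the integrand of the drift term as $\frac{d}{dt}\bigl(N_i(t)\phi(X_i(t),t)\bigr)$ via the cohort ODEs \eqref{EBT:internal}--\eqref{EBT:boundary}, apply the fundamental theorem of calculus cohort by cohort (with the boundary cohort treated separately), and combine its birth inflow with the $\phi(x_b,t)$ term to obtain the factor $\phi(X_B(t),t)-\phi(x_b,t)$. Your remarks on the constancy of $B$ and the sign bookkeeping are exactly the (purely notational) points where care is needed, so nothing is missing.
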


\begin{proof}
We write the residual (\ref{residual_weak_arbitrary_measure}) as%
\begin{align*}
R_{\phi}(\zeta_{t}^{N},\nu,t_{1},t_{2}) & =\int_{x_{b}}^{\infty}\phi
(x,t_{2})\,d\zeta_{t_{2}}^{N}(x)-\int_{x_{b}}^{\infty}\phi(x,t_{1})\,d\nu(x)
\\
& -\int_{t_{1}}^{t_{2}}\int_{x_{b}}^{\infty}\left( \phi_{2}(\xi
,t)+g(x,\zeta_{t}^{N})\ \phi_{1}(x,t)-\mu(x,\zeta_{t}^{N})\phi(x,t)\right)
\,d\zeta_{t}^{N}(x)dt \\
& +\int_{t_{1}}^{t_{2}}\ \phi(x_{b},t)\left(
\int_{x_{b}}^{\infty}\beta(x^{\prime},\zeta_{t}^{N})\,d\zeta_{t}^{N}(x^{%
\prime})\right) \,dt= \\
& =I(\zeta_{t_{2}}^{N})-II(\nu)-III(\zeta_{t}^{N})-IV(\zeta_{t}^{N}).
\end{align*}
Here we have used the shorthand notation $\phi_{1}(\xi,t)=\partial\phi
(\xi,t)/\partial x$ and $\phi_{2}(\xi,t)=\partial\phi(\xi,t)/\partial t$.

Recalling that 
\[
\zeta_{t}^{N}=\sum_{i=B}^{N}N_{i}(t)\delta_{X_{i}(t)}, 
\]
we get 
\[
I(\zeta_{t_{2}}^{N})=\sum_{i=B}^{N}N_{i}(t_{2})\phi(X_{i}(t_{2}),t_{2}), 
\]%
\begin{gather*}
III(\zeta_{t}^{N})=\sum_{i=B}^{N}\int_{t_{1}}^{t_{2}}N_{i}(t)\text{ }\left(
\phi_{2}(X_{i}(t),t)+g(X_{i}(t), \zeta_{t}^{N})\ \phi_{1}(X_{i}(t),t)-\mu
(x_{i}(t),\zeta_{t}^{N})\phi(X_{i}(t),t)\right) dt= \\
=III_{B}(\zeta_{t}^{N})+\sum_{i=B+1}^{N}III_{i}(\zeta_{t}^{N}).
\end{gather*}
Now, by (\ref{EBT:internal}), we have%
\begin{align*}
III_{i}(\zeta_{t}^{N}) & =\int_{t_{1}}^{t_{2}}N_{i}(t)\text{ }%
\phi_{2}(X_{i}(t),t)+N_{i}(t)\text{ }\frac{dX_{i}(t)}{dt}\
\phi_{1}(x_{i}(t),t)+\frac{dN_{i}(t)}{dt}\phi(X_{i}(t),t)dt= \\
& =\int_{t_{1}}^{t_{2}}\frac{d}{dt}\left( N_{i}(t)\phi(X_{i}(t),t)\right)
dt=N_{i}(t_{2})\phi(X_{i}(t_{2}),t_{2})-N_{i}(t_{1})\phi(X_{i}(t_{1}),t_{1}).
\end{align*}
Thus 
\[
I(\zeta_{t_{2}}^{N})-\sum_{i=B+1}^{N}III_{i}(\zeta_{t}^{N})=N_{B}(t_{2})%
\phi(X_{B}(t_{2}),t_{2})+\sum_{i=B+1}^{N}N_{i}(t_{1})%
\phi(X_{i}(t_{1}),t_{1}). 
\]
In the same way, but now also using (\ref{EBT:boundary}), we get%
\begin{align*}
III_{B}(\zeta_{t}^{N}) & =\int_{t_{1}}^{t_{2}}\frac{d}{dt}\left( N_{B}(t)%
\text{ }\phi(X_{B}(t),t)\right)
-\phi(X_{B}(t),t)\sum_{i=B}^{N}\beta(X_{i}(t),\zeta_{t}^{N})\ N_{i}(t)dt= \\
& =N_{B}(t_{2})\text{ }\phi(X_{B}(t_{2}),t_{2})-N_{B}(t_{1})\text{ }%
\phi(X_{B}(t_{1}),t_{1})- \\
& -\int_{t_{1}}^{t_{2}}\phi(X_{B}(t),t)\sum_{i=B}^{N}\beta(X_{i}(t),\zeta
_{t}^{N})\ N_{i}(t)dt.
\end{align*}
Since 
\[
IV(\zeta_{t}^{N})=\int_{t_{1}}^{t_{2}}\phi(x_{b},t)\sum_{i=B}^{N}\beta
(X_{i}(t),\zeta_{t}^{N})\ N_{i}(t)dt, 
\]
we have%
\begin{align*}
-III_{B}(\zeta_{t}^{N})-IV(\zeta_{t}^{N}) & =-N_{B}(t_{2})\text{ }\phi
(X_{B}(t_{2}),t_{2})+N_{B}(t_{1})\text{ }\phi(X_{b}(t_{1}),t_{1})+ \\
&
+\int_{t_{1}}^{t_{2}}(\phi(X_{B}(t),t)-\phi(x_{b},t))\sum_{i=B}^{N}%
\beta(X_{i}(t),\zeta_{t}^{N})\ N_{i}(t)dt.
\end{align*}
Summing up the calculations above, we get%
\begin{gather*}
I(\zeta_{t_{2}}^{N})-\sum_{i=B+1}^{N}III_{i}(\zeta_{t}^{N})-III_{B}(\zeta
_{t}^{N})-IV(\zeta_{t}^{N})= \\
=N_{B}(t_{2})\phi(X_{B}(t_{2}),t_{2})+\sum_{i=B+1}^{N}N_{i}(t_{1})\phi
(X_{i}(t_{1}),t_{1})-N_{B}(t_{2})\text{ }\phi(X_{B}(t_{2}),t_{2})+ \\
+N_{B}(t_{1})\text{ }\phi(X_{B}(t_{1}),t_{1}) +\int_{t_{1}}^{t_{2}}(\phi
(X_{B}(t),t)-\phi(x_{b},t))\sum_{i=B}^{N}\beta(X_{i}(t),\zeta_{t}^{N})\
N_{i}(t)dt= \\
=\sum_{i=B}^{N}N_{i}(t_{1})\phi(X_{i}(t_{1}),t_{1})+\int_{t_{1}}^{t_{2}}(%
\phi(X_{B}(t),t)-\phi(x_{b},t))\sum_{i=B}^{N}\beta(X_{i}(t),\zeta_{t}^{N})\
N_{i}(t)dt.
\end{gather*}
Finally we get that 
\begin{align*}
R_{\phi}(\zeta_{t}^{N},\nu,t_{1},t_{2}) &
=\sum_{i=B}^{N}N_{i}(t_{1})\phi(X_{i}(t_{1}),t_{1})-\int_{x_{b}}^{\infty}%
\phi(x,t_{1})\,d\nu(x)+ \\
&
+\int_{t_{1}}^{t_{2}}(\phi(X_{B}(t),t)-\phi(x_{b},t))\sum_{i=B}^{N}%
\beta(X_{i}(t),\zeta_{t}^{N})\ N_{i}(t)dt.
\end{align*}
\end{proof}

\begin{remark}
The residual can be interpreted as the sum of the error arising from the
discretization of the initial data and the error arising from the boundary
cohort. In the interior of the individual state space, the EBT\ method gives
an exact solution, i.e., there are no errors arising from the transportation
of the interior cohorts.
\end{remark}

\begin{lemma}[Step 4]
\label{Lemma_step_4}With $\zeta_{t}^{N}$ defined by the EBT method with
internalizations at times $t_{i}=iT/n$, we have that%
\[
R_{\phi}(\zeta_{t}^{N},\nu_{0},0,T)\rightarrow0, 
\]
as $N$ and $n$ tends to infinity. Here $\nu_{0}$ is the initial data at time 
$t=t_{0}=0$.
\end{lemma}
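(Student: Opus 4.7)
My plan is to combine Lemma \ref{Residual_without_internalization} with a telescoping argument across the internalization times, reducing the global residual to a sum of boundary-cohort errors each of size $O(1/n)$.

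First, I would split off the initial-data discrepancy. Since $\zeta_0^N$ converges weakly to $\nu_0$ by construction, I write
\[
R_\phi(\zeta_t^N,\nu_0,0,T) = R_\phi(\zeta_t^N,\zeta_0^N,0,T) + \int_{x_b}^{\infty}\phi(x,0)\,d(\zeta_0^N-\nu_0)(x),
\]
and the second term tends to $0$ as $N\to\infty$ since $\phi(\cdot,0)\in C_0^{\infty}(\mathbb{R}_+)$. Next, I would exploit the fact that internalization does not alter the measure $\zeta_t^N$ (a new boundary cohort is injected with $N_B=0$), so $\zeta_t^N$ is weakly continuous across each $t_i$. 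This lets me telescope
\[
R_\phi(\zeta_t^N,\zeta_0^N,0,T) = \sum_{i=1}^{n} R_\phi\!\left(\zeta_t^N,\zeta_{t_{i-1}}^N,t_{i-1},t_i\right),
\]
since on each subinterval $(t_{i-1},t_i)$ no internalization occurs, and the boundary terms at intermediate $t_i$ cancel exactly.

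For each subinterval I apply Lemma \ref{Residual_without_internalization} with initial data $\nu = \zeta_{t_{i-1}}^N$. Because $\sum_j N_j(t_{i-1})\phi(X_j(t_{i-1}),t_{i-1}) = \int \phi(x,t_{i-1})\,d\zeta_{t_{i-1}}^N(x)$, the first two terms of that lemma cancel, leaving the clean formula
\[
R_\phi(\zeta_t^N,\zeta_{t_{i-1}}^N,t_{i-1},t_i) = \int_{t_{i-1}}^{t_i}\!\bigl(\phi(X_B(t),t)-\phi(x_b,t)\bigr)\sum_{j=B}^{N}\beta(X_j(t),\zeta_t^N)\,N_j(t)\,dt.
\]
This is the crux: the entire residual reduces to a sum of boundary-cohort errors localized where $X_B(t)$ drifts away from $x_b$.

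Now I bound this quantitatively. Immediately after internalization at time $t_{i-1}$ we have $X_B(t_{i-1})=x_b$, and since $dX_B/dt = g(X_B,\zeta^N) \leq g_{\sup}$, we get $|X_B(t)-x_b|\leq g_{\sup}(t-t_{i-1})\leq g_{\sup}T/n$ throughout $[t_{i-1},t_i]$. Because $\phi$ is $C_0^{\infty}$ hence Lipschitz in $x$, this gives $|\phi(X_B(t),t)-\phi(x_b,t)|\leq \|\partial_x\phi\|_\infty\,g_{\sup}T/n$. The sum $\sum_j \beta(X_j,\zeta^N)N_j \leq \beta_{\sup}P_N(t)$ is uniformly bounded by the estimate $P_N(t)\leq P_N(0)e^{\beta_{\sup}T}$ from the proof of Lemma \ref{Lemma_step_1}. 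Therefore
\[
|R_\phi(\zeta_t^N,\zeta_{t_{i-1}}^N,t_{i-1},t_i)| \leq \frac{C\,T}{n}\cdot(t_i-t_{i-1}),
\]
with $C$ independent of $i$ and $N$, and summing over $i$ yields a bound of order $CT^2/n$. Letting $n\to\infty$ kills this contribution, and letting $N\to\infty$ kills the initial-data discrepancy, completing the proof.

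The only subtle point I anticipate is the telescoping step: one must verify that the measure $\zeta_{t_i}^N$ appearing as the endpoint of the $i$-th residual matches the initial datum $\zeta_{t_i}^N$ of the $(i{+}1)$-st, which relies precisely on the convention $N_B(t_i)=0$ for a freshly created boundary cohort so that internalization is a no-op on the underlying measure. Everything else is quantitative book-keeping using the uniform bounds already established.
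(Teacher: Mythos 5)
Your proposal is correct and follows essentially the same route as the paper: telescoping the residual over the internalization intervals, applying Lemma \ref{Residual_without_internalization} on each subinterval, bounding the boundary-cohort error by $|X_B(t)-x_b|\leq g_{\sup}(t-t_{i-1})$ together with the exponential mass bound from Lemma \ref{Lemma_step_1}, and letting the initial-data discrepancy vanish by the weak convergence of $\zeta_0^N$ to $\nu_0$. The only difference is cosmetic bookkeeping: you split off the initial-data error by linearity in $\nu$ before telescoping, whereas the paper keeps $\nu_0$ as the datum on the first subinterval so that this error appears as the first two terms of the lemma there.
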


\begin{proof}
We first write 
\[
R_{\phi}(\zeta_{t}^{N},\nu_{0},0,T)=R_{\phi}(\zeta_{t}^{N},\nu_{0},0,t_{1})+%
\sum_{i=1}^{n-1}R_{\phi}(\zeta_{t}^{N},\zeta_{t_{i}}^{N},t_{i},t_{i+1}). 
\]
By Lemma \ref{Residual_without_internalization} we have,%
\begin{align*}
R_{\phi}(\zeta_{t}^{N},\nu_{0},0,t_{1}) & =\sum_{i=B}^{N}N_{i}(0)\phi
(x_{i}(0),0)-\int_{x_{b}}^{\infty}\phi(x,0)\,d\nu_{0}(x)+ \\
& +\int_{0}^{t_{1}}(\phi(X_{B}(t),t)-\phi(x_{b},t))\sum_{i=B}^{N}\beta
(X_{i}(t),\zeta_{t}^{N})\ N_{i}(t)dt,
\end{align*}
and%
\[
R_{\phi}(\zeta_{t}^{N},\zeta_{t_{i}}^{N},t_{i},t_{i+1})=%
\int_{t_{i}}^{t_{i+1}}(\phi(X_{B}(t),t)-\phi(x_{b},t))\sum_{j=B}^{N}%
\beta(x_{j}(t),\zeta_{t}^{N})\ N_{j}(t)dt. 
\]
A straightforward estimate now gives%
\begin{align*}
\left\vert R_{\phi}(\zeta_{t}^{N},\nu_{0},0,T)\right\vert & \leq\left\vert
\sum_{i=B}^{N}N_{i}(0)\phi(x_{i}(0),0)-\int_{x_{b}}^{\infty}\phi
(x,0)\,d\nu_{0}(\xi)\right\vert + \\
& +\sum_{i=0}^{n-1}\int_{t_{i}}^{t_{i+1}}\left\vert \phi(X_{B}(t),t)-\phi
(x_{b},t)\right\vert \sum_{j=B}^{N}\beta(x_{j}(t),\zeta_{t}^{N})\ N_{j}(t)dt.
\end{align*}
The first term tends to zero by assumption as the number of initial cohorts, 
$N,$ tends to infinity. Noting that $x_{b}=X_{B}(t_{1})$ and using that the
growth rate is bounded, we get 
\[
\left\vert \phi(X_{B}(t),t)-\phi(x_{b},t)\right\vert \leq C_{\phi}\left\vert
X_{B}(t)-x_{b}\right\vert \leq C_{\phi g}\left\vert t-t_{i}\right\vert . 
\]
Hence, 
\begin{gather*}
\sum_{i=0}^{n-1}\int_{t_{i}}^{t_{i+1}}\left\vert \phi(X_{B}(t),t)-\phi
(x_{b},t)\right\vert \sum_{j=B}^{N}\beta(x_{j}(t),\zeta_{t}^{N})\
N_{j}(t)dt\leq \\
\leq\sum_{i=0}^{n-1}C_{\phi g}\left\vert t_{i+1}-t_{i}\right\vert
^{2}C_{\beta\nu_{0}},
\end{gather*}
for the constant $C_{\beta\nu_{0}}=\beta_{\sup}\nu_{0}([x_{b},\infty
))\exp(\beta_{\sup}T)$. Thus, the last sum is bounded by $C(T)/n$ which also
tends to zero as the number of internalizations tends to infinity.
\end{proof}

\begin{remark}
Examining the proof above, we see that the residual tends to zero whenever
the maximal time between two internalizations of the boundary cohort tends
to zero. Hence, we can relax the assumption that the times at which the
boundary cohort is internalized are evenly distributed.
\end{remark}

Recalling that the initial cohorts are chosen to converge weakly to the
initial data, we are now able to prove convergence of the Escalator Boxcar
Train:

\begin{theorem}
\label{MainThmWONumerics}Assume that the assumptions on the birth, growth,
and mortality rates in the beginning of Sect. \ref{Section_3} hold. If the
structured population model given by \eqref{transport}, %
\eqref{transport_boundary}, and \eqref{transport_initial} has a unique
solution $\zeta_{t}$, then the the solutions $\zeta_{t}^{N}$ given by the
EBT method converge weakly to $\zeta_{t}$ as the number of initial cohorts
tends to infinity and the maximal time between two boundary cohort
internalizations tends to zero.
\end{theorem}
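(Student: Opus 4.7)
The plan is to assemble the four preceding lemmas into a standard compactness-plus-uniqueness argument. Write $\zeta_t^k$ for the sequence of EBT measures indexed by a single parameter $k$, where the number of initial cohorts $N_k\to\infty$ and the maximal spacing between successive internalizations tends to zero as $k\to\infty$. First I would invoke Lemma~\ref{Lemma_step_1} together with the diagonal construction in Step~2 to extract a subsequence, still denoted $\zeta_t^k$, that converges weakly at every $t\in[0,T]$ to a weakly continuous mapping $\tilde\zeta_t$. Combining Lemma~\ref{Step3} and Lemma~\ref{Lemma_step_4} along this subsequence yields
\[
R_\phi(\tilde\zeta_t)=\lim_{k\to\infty}R_\phi(\zeta_t^k)=0
\]
for every $\phi\in C_0^{\infty}(\mathbb{R}_+\times[0,T])$, so $\tilde\zeta_t$ is a weak solution of the PSPM. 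The assumed uniqueness then forces $\tilde\zeta_t=\zeta_t$ throughout $[0,T]$.

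The remaining task is to upgrade this subsequential statement to convergence of the entire sequence. My plan here is the standard sub-subsequence argument. Suppose, for contradiction, that there exist a time $t^{*}\in[0,T]$, a bounded continuous function $\phi^{*}$ on $\Omega$, a constant $\varepsilon_0>0$, and a subsequence $\{k_j\}$ with
\[
\left|\int_{x_b}^{\infty}\phi^{*}\,d\zeta_{t^{*}}^{k_j}-\int_{x_b}^{\infty}\phi^{*}\,d\zeta_{t^{*}}\right|\ge\varepsilon_0
\]
for all $j$. The second clause of Lemma~\ref{Lemma_step_1} is crucial here: it lets me rerun the Step~2 diagonal construction starting from $\{\zeta_t^{k_j}\}$ in place of the original sequence, producing a further subsequence $\{k_{j_\ell}\}$ along which $\zeta_t^{k_{j_\ell}}$ converges weakly for every $t$ to a weakly continuous limit $\hat\zeta_t$. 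Lemma~\ref{Step3} and Lemma~\ref{Lemma_step_4} apply without change to $\{\zeta_t^{k_{j_\ell}}\}$, so $\hat\zeta_t$ is again a weak solution; uniqueness forces $\hat\zeta_t=\zeta_t$, which contradicts the displayed lower bound.

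I expect the only real subtlety to be the joint limit in the first paragraph: Lemma~\ref{Lemma_step_4} requires both $N_k\to\infty$ (so that the initial-data discretization error vanishes) and the internalization spacing to tend to zero (so that the boundary-cohort contribution, isolated through Lemma~\ref{Residual_without_internalization}, vanishes). Once those two error sources are driven to zero simultaneously along the chosen subsequence, the rest of the proof is essentially bookkeeping built on the compactness supplied by Lemma~\ref{Lemma_step_1} and the continuity-of-integration estimate of Lemma~\ref{continuous_function_integral}.
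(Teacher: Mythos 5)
Your proposal is correct and follows essentially the same route as the paper: the paper's Step~5 is exactly the compactness-plus-uniqueness contradiction you describe, extracting from any putatively non-converging subsequence a further subsequence (via Lemmas~\ref{Lemma_step_1}--\ref{Lemma_step_4}) whose limit is a weak solution and hence, by uniqueness, must equal $\zeta_t$, contradicting the assumed separation. Your version merely spells out the negation of weak convergence (the choice of $t^{*}$, $\phi^{*}$, $\varepsilon_0$) and the role of the second clause of Lemma~\ref{Lemma_step_1} more explicitly than the paper does.
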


\begin{proof}
(Step 5) We assume that the entire sequence $\zeta_{t}^{N}$ does not
converge to $\zeta_{t}$. Then, in the weak topology, there exists an open
neighborhood $U$ of $\zeta_{t}$, and a subsequence $\zeta_{t}^{N_{k}}$ of $%
\zeta_{t}^{N}$ such that $\zeta_{t}^{N_{k}}\notin U$ for all $N_{k}$. From
Lemma \ref{Lemma_step_1}-\ref{Lemma_step_4}, we conclude that $%
\{\zeta_{t}^{N_{k}}\}$ contains a convergent sub-sequence with a limit point
not equal to $\zeta_{t}$, which is a contradiction since it would imply that
the solution to the PSPM is not unique.
\end{proof}

The proof of convergence assumed exact solutions to the ordinary
differential equations (ODEs) underlying the EBT\ method. In practical
implementations, these need to be solved numerically which introduces small
but finite approximation errors. We now extend the convergence proof to
account for errors introduced by the underlying ODE\ solver.

The following lemma is an immediate consequence of Lemma \ref{Step3}.

\begin{lemma}
\label{numerical_approximation}Assume that $\zeta_{t}^{N,h}=%
\sum_{i=B}^{N}N_{i}^{h}(t)\delta_{X_{i}^{h}(t)}$. If for each $t$ we have
that $N_{i}^{h}(t)\rightarrow N_{i}(t)$ and $X_{i}^{h}(t)\rightarrow X_{i}(t)
$ as $h\searrow0$ then $R_{\phi}(\zeta_{t}^{N,h},\nu_{0},0,T)\rightarrow
R_{\phi }(\zeta_{t}^{N},\nu_{0},0,T)$ as $h\searrow0$.
\end{lemma}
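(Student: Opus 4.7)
The plan is to reduce the claim to Lemma \ref{Step3} by verifying that $\zeta_t^{N,h}$ converges weakly to $\zeta_t^N$ as $h\searrow 0$, pointwise in $t\in[0,T]$. Once that weak convergence is in hand, Lemma \ref{Step3} applies verbatim and delivers the residual convergence.

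First I would take a bounded continuous test function $\psi$ on $\Omega$ and write
\[
\int_{\Omega}\psi\,d\zeta_t^{N,h}-\int_{\Omega}\psi\,d\zeta_t^{N}=\sum_{i=B}^{N}\bigl(N_i^h(t)\psi(X_i^h(t))-N_i(t)\psi(X_i(t))\bigr).
\]
This is a sum of finitely many terms (the total number of cohorts $N-B+1$ is fixed, since the internalization schedule is determined by the EBT method and does not depend on the ODE-solver step $h$), each of which tends to zero as $h\searrow 0$ by the assumed convergences $N_i^h(t)\to N_i(t)$ and $X_i^h(t)\to X_i(t)$ combined with continuity of $\psi$. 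Hence $\zeta_t^{N,h}\to\zeta_t^{N}$ weakly for each fixed $t\in[0,T]$.

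Second, I would invoke Lemma \ref{Step3}. Its hypothesis is precisely this pointwise-in-$t$ weak convergence, together with the standing Lipschitz and boundedness assumptions on $\beta$, $g$, and $\mu$ imposed at the beginning of Section \ref{Section_3}, which carry over unchanged. Its conclusion is
\[
R_\phi(\zeta_t^{N,h},\nu_0,0,T)\to R_\phi(\zeta_t^{N},\nu_0,0,T),
\]
as $h\searrow 0$, for every test function $\phi\in C_0^{\infty}(\mathbb{R}_+\times[0,T])$, which is exactly the claim.

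There is no real obstacle here — this is why the authors call the lemma an immediate consequence. The only mild subtlety is notational: Lemma \ref{Step3} was stated for a sequence indexed by an integer $k$, whereas the index $h$ here is continuous. However, its proof is routed through Lemma \ref{continuous_function_integral} and Lebesgue's dominated convergence theorem, and uses the indexing only as a label. The extension from sequential to continuous-parameter convergence is automatic, either by the usual subsequence argument or simply by reading the proof with $h\searrow 0$ in place of $k\to\infty$.
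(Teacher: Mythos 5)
Your proposal is correct and follows exactly the route the paper intends: the paper gives no separate proof, simply declaring the lemma an immediate consequence of Lemma \ref{Step3}, and your argument supplies the implicit step (pointwise-in-$t$ weak convergence of $\zeta_t^{N,h}$ to $\zeta_t^N$ from the finitely many converging cohort masses and positions) before invoking that lemma. The remark about passing from the sequential index $k$ to the continuous parameter $h$ is a fair and correctly resolved point of care.
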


Combining the lemma above with Theorem \ref{MainThmWONumerics} we finally
have

\begin{theorem}
\label{MainThmEBTConvergence}Assume that the assumptions on the birth,
growth, and mortality rates in the beginning of Sect. \ref{Section_3} hold.
If the structured population model given by \eqref{transport}, %
\eqref{transport_boundary}, and \eqref{transport_initial} has a unique
solution $\zeta_{t}$, then the the solutions $\zeta_{t}^{N,h}$, given by the
numerical integration of the EBT method, converges weakly to $\zeta_{t}$ if
the number of initial cohorts tends to infinity and the maximal time between
two boundary cohort internalizations tends to zero, while $h$ tends to zero
sufficiently fast.
\end{theorem}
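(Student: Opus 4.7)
The plan is to reduce Theorem \ref{MainThmEBTConvergence} to Theorem \ref{MainThmWONumerics} by treating the ODE solver error as an additional perturbation on top of the EBT discretization error, and controlling both via the residual machinery already developed. First I would observe that for any standard convergent integrator (Euler, Runge--Kutta, etc.) applied to the finite autonomous system \eqref{EBT:internal}--\eqref{EBT:boundary} with a fixed number of cohorts $N$, classical ODE convergence theory yields $N_i^h(t)\to N_i(t)$ and $X_i^h(t)\to X_i(t)$ pointwise in $t$ as $h\searrow 0$. This is exactly the hypothesis of Lemma \ref{numerical_approximation}, so for every test function $\phi$ we have $R_\phi(\zeta_t^{N,h})\to R_\phi(\zeta_t^{N})$ as $h\searrow 0$, and in addition $\zeta_t^{N,h}\to\zeta_t^{N}$ weakly (which follows from pointwise convergence of the finitely many cohort masses and positions, since $\zeta_t^{N,h}$ is a finite sum of Dirac measures).

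I would then repeat Steps 1--5 of the proof of Theorem \ref{MainThmWONumerics} verbatim, with $\zeta_t^{N,h}$ in place of $\zeta_t^N$. Steps 1 and 2 (tightness of the approximating family, extraction of a weakly continuous subsequential limit) go through because their Gronwall-type arguments depend only on the boundedness of $\beta, \mu, g$ and on the initial mass, not on exact ODE solvability; for $h$ small enough these bounds are inherited by $\zeta_t^{N,h}$. Step 3 applies without change since it only uses weak convergence and Lemma \ref{continuous_function_integral}. The crux is Step 4, where I would use the triangle inequality
\begin{equation*}
\bigl|R_\phi(\zeta_t^{N,h})\bigr| \leq \bigl|R_\phi(\zeta_t^{N,h})-R_\phi(\zeta_t^{N})\bigr| + \bigl|R_\phi(\zeta_t^{N})\bigr|.
\end{equation*}
By Lemma \ref{Lemma_step_4}, the second term vanishes as $N\to\infty$ and the internalization frequency grows; by Lemma \ref{numerical_approximation}, the first term vanishes as $h\searrow 0$ for each fixed $N$. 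Hence, if $h=h(N)$ is chosen so that the first term is bounded by, say, $1/N$ for every $\phi$ in a fixed countable dense family of test functions, both terms vanish simultaneously. This is the precise content of ``$h$ tends to zero sufficiently fast''. Step 5 (the uniqueness contradiction) is then identical to before: every subsequential weak limit of $\{\zeta_t^{N,h(N)}\}$ has zero residual and is weakly continuous, hence equals the unique weak solution $\zeta_t$.

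The main obstacle is making the rate ``sufficiently fast'' quantitative and uniform in $N$. The implicit constants in Lemma \ref{numerical_approximation} depend on the Lipschitz constants of $g,\beta,\mu$ in their measure argument, on the exponentially growing total mass bound from Lemma \ref{Lemma_step_1}, and (through the stability of the ODE system) potentially on $N$ itself via the environmental feedback $\zeta_t^{N,h}$. For the qualitative convergence asserted here this is not a real obstruction, since the diagonal selection of $h(N)$ can be made after the fact once $N$ is fixed; a quantitative convergence rate would require a more delicate analysis of truncation error propagation through the coupled system, which I would defer as beyond the scope of the present theorem.
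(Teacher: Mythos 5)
Your proposal is correct and follows essentially the same route as the paper, whose entire proof of Theorem \ref{MainThmEBTConvergence} is the one-line observation that Lemma \ref{numerical_approximation} combined with Theorem \ref{MainThmWONumerics} gives the result. Your added details (standard ODE-solver convergence to verify the hypothesis of Lemma \ref{numerical_approximation}, the triangle inequality on the residual, and the diagonal choice $h=h(N)$ making precise the phrase ``$h$ tends to zero sufficiently fast'') are exactly the steps the paper leaves implicit.
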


\section{The original definition of the boundary cohort}

\label{Section_OD}

Our study of convergence of the Escalator Boxcar Train in Sect.~\ref%
{Section_3} assumed different dynamics of the boundary cohorts than was used
in the original formulation of the method by de Roos \cite{Roos1988}. We
based our work on the assumption that the boundary cohort differed from the
interior cohorts only in the addition of a term for the inflow of newborns.
In this section, we consider the convergence of the EBT method under the
original definition of the boundary cohort dynamics.

While we simply assumed a dynamical system for the boundary cohort, de Roos
formally derived the underlying equations. Consequently, the original
dynamics for the boundary cohort reflect the reduction in center of mass
that in reality accompanies an inflow of newborns. Moreover, as the center
of mass is not defined as a physical quantity for an empty cohort, the
equations were derived through series expansion around the size at birth.
Thus, rather than tracking the center of mass $X_{B}(t)$ directly, de Roos
considered a quantity $\pi_{B}$ which roughly represents the cumulative
amount by which the individuals in the boundary cohort exceed their birth
size. This quantity is mapped onto the center of mass through the non-linear
transformation%
\begin{equation}
X_{B}=\left\{ 
\begin{array}{cc}
\dfrac{\pi_{B}}{N_{B}}+x_{b}, & \text{if }\pi_{B}>0, \\ 
x_{b}, & \text{otherwise.}%
\end{array}
\right.   \label{nonlinear_transformation}
\end{equation}
The specific equations used for defining the boundary cohort were%
\begin{align}
\dfrac{dN_{B}}{dt} & =-\mu(x_{b},\zeta^{N})N_{B}-\dfrac{\partial\mu
(x_{b},\zeta^{N})}{\partial x}\pi_{B}+\sum_{i=B}^{N}\beta(X_{i},\zeta
^{N})N_{i},  \label{N_deRoos_definition} \\
\dfrac{d\pi_{B}}{dt} & =g(x_{b},\zeta^{N})N_{B}+\dfrac{\partial
g(x_{b},\zeta^{N})}{\partial x}\pi_{B}-\mu(x_{b},\zeta^{N})\pi_{B}, 
\label{pi_deRoos_definition}
\end{align}
with initial conditions $N_{B}=\pi_{B}=0$. We will assume that these are
non-negative, as this is a natural requirement which can easily be enforced
by an ODE\ solver if necessary. The appearance of partial derivatives in the
expressions above, arising from series expansion around the size at birth,
in conjunction with the non-linear transformation mapping $\pi_{B}$ and $%
N_{B}$ onto $X_{B}$, pose new challenges for proving convergence. As we will
show, however, our proof of convergence can be tailored to accompany also
the original definition of the boundary cohort.

Note first that the only parts in the proof of convergence in which the
equations defining the boundary cohort are used is Lemma \ref{Step3} and
implicitly in Theorem \ref{MainThmEBTConvergence}. It therefore suffices to
give new proofs of these statements. To this end, we require an additional
lemma concerning the behavior of the quotient $\pi_{B}/N_{B}$:

\begin{lemma}
With $N_{B}$ and $\pi_{B}$ defined by \eqref{N_deRoos_definition} and %
\eqref{pi_deRoos_definition}, we get 
\[
0\leq X_{B}-x_{b}\leq Ct, 
\]
for $t\in\lbrack t_{0},t_{0}+h]$ and some positive constants $C$ and $h$
which only depend on\ $g$, $\partial g/\partial x$ and $\partial\mu/\partial
x$.
\end{lemma}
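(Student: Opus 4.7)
The plan is to set $y(t) := X_B(t) - x_b$, derive a first-order differential inequality for $y$, and apply Gr\"onwall's lemma on the interval $[t_0, t_0+h]$, where $t_0$ is the time of the most recent internalization (so $N_B(t_0) = \pi_B(t_0) = 0$). The lower bound $X_B \geq x_b$ is immediate from \eqref{nonlinear_transformation} together with the standing assumption $\pi_B, N_B \geq 0$, so everything below concerns the upper bound.

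First I would pin down the behavior of $y$ at the singular point $t_0$. Set $\beta_0 := \sum_{i=B}^{N} \beta(X_i(t_0),\zeta_{t_0}^N)\, N_i(t_0)$. If $\beta_0 = 0$, equations \eqref{N_deRoos_definition}--\eqref{pi_deRoos_definition} preserve $N_B \equiv \pi_B \equiv 0$, so $X_B \equiv x_b$ and the claim is trivial. Otherwise a short Taylor expansion at $t_0$ gives $N_B(t) = \beta_0 (t - t_0) + O((t - t_0)^2)$ and $\pi_B(t) = \tfrac{1}{2} g(x_b, \zeta_{t_0}^N)\, \beta_0 (t - t_0)^2 + O((t - t_0)^3)$, so $y = \pi_B/N_B$ extends continuously to $t_0$ with $y(t_0) = 0$.

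Next I would differentiate $y$ on $(t_0, t_0+h]$ by the quotient rule and substitute \eqref{N_deRoos_definition}--\eqref{pi_deRoos_definition}. After the $\mu(x_b,\zeta^N)\, y$ contributions coming from $\pi_B'/N_B$ and from $-y\, N_B'/N_B$ cancel, one is left with
\begin{equation*}
y'(t) = g(x_b, \zeta^N) + \frac{\partial g}{\partial x}(x_b, \zeta^N)\, y + \frac{\partial \mu}{\partial x}(x_b, \zeta^N)\, y^2 - \frac{y}{N_B} \sum_{i=B}^{N} \beta(X_i, \zeta^N)\, N_i .
\end{equation*}
The last term is non-positive since all its factors are non-negative. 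Dropping it and bounding the remaining coefficients by their uniform sup norms gives the key inequality
\begin{equation*}
y'(t) \leq g_{\sup} + M\, y(t) + M\, y(t)^2, \qquad M := \sup\!\left|\tfrac{\partial g}{\partial x}\right| + \sup\!\left|\tfrac{\partial \mu}{\partial x}\right| .
\end{equation*}

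A bootstrap argument then closes the proof. Since $y$ is continuous with $y(t_0) = 0$, pick $h > 0$ small enough that $y(t) \leq 1$ on $[t_0, t_0 + h]$; on this interval $y^2 \leq y$, hence $y' \leq g_{\sup} + 2M\, y$, and Gr\"onwall's lemma yields $y(t) \leq (g_{\sup}/(2M))(e^{2M(t-t_0)} - 1) \leq C(t-t_0)$ with $C = g_{\sup}\, e^{2Mh}$, which implies the stated bound $X_B - x_b \leq Ct$ on $[t_0, t_0 + h]$. The main obstacle is precisely the singular point $t = t_0$: $y$ is \emph{a priori} a $0/0$ expression there, so one cannot apply Gr\"onwall without first establishing the limit $y(t_0) = 0$ and the validity of the quotient rule near $t_0$. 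The Taylor-expansion step handles this, and it is also where the non-negativity of $\pi_B$ and $N_B$ (and hence the sign of the dropped birth term) plays its essential role.
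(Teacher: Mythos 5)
Your proposal is correct and follows essentially the same route as the paper: differentiate $\pi_B/N_B$ by the quotient rule, cancel the $\mu\,\pi_B/N_B$ contributions, drop the non-positive birth term using non-negativity of $\pi_B$, $N_B$, $\beta$, $N_i$, and conclude from the resulting inequality $y'\leq a+by+cy^2$ that $X_B-x_b$ grows at most linearly after an internalization. The differences are cosmetic: the paper closes with the crude comparison $X_B'\leq 2a$ while $X_B\leq X_B^{\ast}$ instead of Gr\"onwall, and it passes silently over the $0/0$ behaviour of $\pi_B/N_B$ at $t_0$, which you treat explicitly.
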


\begin{proof}
From the definitions of $N_{B}$ and $\pi_{B}$ we have%
\begin{align*}
\dfrac{d}{dt}X_{b} & =\dfrac{d}{dt}\dfrac{\pi_{B}}{N_{B}}=\dfrac{1}{N_{B}}%
\dfrac{d\pi_{B}}{dt}-\dfrac{\pi_{B}}{N_{B}^{2}}\dfrac{dN_{B}}{dt}= \\
& =g+\dfrac{\partial g}{\partial x}\dfrac{\pi_{B}}{N_{B}}-\mu\dfrac{\pi_{B}}{%
N_{B}}+\mu\dfrac{\pi_{B}}{N_{B}}+\dfrac{\partial\mu}{\partial x}\dfrac {%
\pi_{B}^{2}}{N_{B}^{2}}-\dfrac{\pi_{B}}{N_{B}^{2}}\sum_{i=B}^{N}\beta
_{i}N_{i}= \\
& =g+\dfrac{\partial g}{\partial x}\dfrac{\pi_{B}}{N_{B}}+\dfrac{\partial\mu 
}{\partial x}\dfrac{\pi_{B}^{2}}{N_{B}^{2}}-\dfrac{\pi_{B}}{N_{B}^{2}}%
\sum_{i=B}^{N}\beta_{i}N_{i}\leq \\
& \leq g+\dfrac{\partial g}{\partial x}\dfrac{\pi_{B}}{N_{B}}+\dfrac {%
\partial\mu}{\partial x}\left( \dfrac{\pi_{B}}{N_{B}}\right) ^{2}.
\end{align*}
Remembering that $X_{B}=\pi_{B}/N_{B}+x_{b}$, we thus have $%
X_{B}^{\prime}\leq a+b(X_{B}-x_{b})+c(X_{B}-x_{b})^{2}$ for some positive
constants $a,b$ and $c$. Hence $X_{B}^{\prime}\leq2a$ when $X_{B}\leq
X_{B}^{\ast}$ for some positive $X_{B}^{\ast}$. Since $X_{B}(0)=x_{b}$ it
follows that $X_{B}(t)\leq x_{b}+2at$ for $t\in\lbrack0,X_{B}^{\ast}/2a].$
\end{proof}

We now use this to show that

\begin{lemma}
\label{bounded_terms_original_def}Assume that a new boundary cohort is
created at time $t=t_{1}$. For $t_{2}>t_{1}$ sufficiently close to $t_{1}$,
we have for all $t\in\lbrack t_{1},t_{2}]$ that 
\begin{equation}
\left\vert N_{B}(t)\left( \dfrac{dX_{B}(t)}{dt}-g(X_{B}(t),\zeta_{t}^{N})%
\right) \right\vert \leq C_{1}(t_{2}-t_{1}),   \label{NB_growth}
\end{equation}
and 
\begin{equation}
\left\vert \pi_{B}(t)\dfrac{\partial}{\partial x}\mu(X_{B}(t),\zeta_{t}^{N})%
\right\vert \leq C_{2}(t_{2}-t_{1}).   \label{Pi_growth_rate}
\end{equation}
\end{lemma}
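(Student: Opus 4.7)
The plan is to derive short-time bounds on the three quantities $N_B(t)$, $\pi_B(t)$ and $X_B(t)-x_b$, all of which vanish at the moment $t_1$ at which the cohort is created, and then substitute these bounds into the expressions on the left-hand side of \eqref{NB_growth} and \eqref{Pi_growth_rate}. Since $N_B(t_1)=\pi_B(t_1)=0$ and the right-hand side of \eqref{N_deRoos_definition} is uniformly bounded (by boundedness of $\mu$, $\partial\mu/\partial x$ and $\beta$, together with the uniform bound $P_N(t)\le P_N(0)\exp(\beta_{\sup}T)$ from the proof of Lemma \ref{Lemma_step_1}), integration yields $0\le N_B(t)\le C_N(t-t_1)$. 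The preceding lemma already supplies $0\le X_B(t)-x_b\le C_X(t-t_1)$ on some interval $[t_1,t_1+h]$. Combining these with the identity $\pi_B=N_B(X_B-x_b)$ gives $\pi_B(t)\le C_N C_X(t-t_1)^2$.

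Bound \eqref{Pi_growth_rate} now follows immediately: boundedness of $\partial\mu/\partial x$ gives
\[
\left|\pi_B(t)\,\frac{\partial\mu}{\partial x}(X_B(t),\zeta_t^N)\right|\le \left\|\frac{\partial\mu}{\partial x}\right\|_\infty C_N C_X(t_2-t_1)^2\le C_2(t_2-t_1),
\]
after absorbing one factor $(t_2-t_1)\le h$ into the constant.

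For bound \eqref{NB_growth} I would reuse the formula for $dX_B/dt$ derived inside the proof of the preceding lemma,
\[
\frac{dX_B}{dt}=g(x_b,\zeta^N)+\frac{\partial g}{\partial x}(x_b,\zeta^N)\frac{\pi_B}{N_B}+\frac{\partial\mu}{\partial x}(x_b,\zeta^N)\left(\frac{\pi_B}{N_B}\right)^2-\frac{\pi_B}{N_B^2}\sum_{i=B}^{N}\beta(X_i,\zeta^N)N_i,
\]
multiply by $N_B$, and use $\pi_B/N_B=X_B-x_b$ to eliminate the $1/N_B$ singularities. After subtracting $N_B\,g(X_B,\zeta^N)$ the quantity to be controlled becomes
\[
N_B\bigl[g(x_b,\zeta^N)-g(X_B,\zeta^N)\bigr]+\frac{\partial g}{\partial x}(x_b,\zeta^N)\,\pi_B+\frac{\partial\mu}{\partial x}(x_b,\zeta^N)\,\pi_B(X_B-x_b)-(X_B-x_b)\sum_{i=B}^{N}\beta(X_i,\zeta^N)N_i.
\]
By Lipschitz continuity of $g$ in $x$ and the bounds above, the first three terms are $O((t-t_1)^2)$, while the last is bounded by $C_X(t-t_1)\,\beta_{\sup}P_N(t)=O(t-t_1)$ and dominates; after choosing $t_2-t_1\le h$ these estimates combine to give \eqref{NB_growth}.

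The main obstacle is the apparent $1/N_B$ singularity in the formula for $dX_B/dt$ near $t=t_1$, where $N_B$ vanishes. It is defused by multiplying through by $N_B$ together with the algebraic substitution $\pi_B/N_B=X_B-x_b$, after which every term is a product of factors that either are uniformly bounded or vanish polynomially in $t-t_1$.
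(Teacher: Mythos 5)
Your proposal is correct and follows essentially the same route as the paper: bound $X_B-x_b$ linearly via the preceding lemma, use $\pi_B=N_B(X_B-x_b)$ to bound $\pi_B$, and control $N_B\bigl(dX_B/dt-g(X_B,\zeta_t^N)\bigr)$ by multiplying the quotient-rule expression for $dX_B/dt$ by $N_B$. You are in fact slightly more careful than the paper, since you explicitly retain and estimate the Lipschitz correction term $N_B\bigl[g(x_b,\zeta^N)-g(X_B,\zeta^N)\bigr]$ coming from the rates being evaluated at $x_b$ in de Roos' equations, and you sharpen $N_B=O(t-t_1)$, $\pi_B=O((t-t_1)^2)$; neither refinement changes the stated linear bounds.
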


\begin{proof}
Since $N_{B}$ is bounded, $\pi_{B}=N_{B}(X_{B}-x_{b})$, it follows from the
above proof that $|\pi_{B}(t)|\leq C(t_{2}-t_{1})$ for some positive
constant $C$. Hence, since also $\partial\mu(X_{B}(t),\zeta_{t}^{N})/%
\partial x$ is bounded by the assumptions in \cite{Roos1988}, the statement (%
\ref{Pi_growth_rate}) follows trivially. To show the first part of the
assertion, we note that 
\[
N_{B}(t)\left( \dfrac{dX_{B}(t)}{dt}-g(X_{B}(t),\zeta_{t}^{N})\ \right) =%
\dfrac{\partial g}{\partial x}\pi_{B}+\dfrac{\partial\mu}{\partial x}\pi _{B}%
\dfrac{\pi_{B}}{N_{B}}-\dfrac{\pi_{B}}{N_{B}}\sum_{i=B}^{N}\beta_{i}N_{i}. 
\]
Since $\pi_{B}$ and $\pi_{B}/N_{B}=X_{B}-x_{b}$ both increases at most
linearly from zero, the assertion (\ref{NB_growth}) follows.
\end{proof}

The two lemmas above will be used to bound the residual between two
internalizations.

\begin{lemma}
\label{Residual_without_internalization_original_definition}Let $0\leq
t_{1}<t_{2}\leq T$ and $v\in\mathcal{M}_{+}(\Omega)$. For a given test
function $\phi\in C_{0}^{\infty}(\mathbb{R}_{+}\times\lbrack0,T])$ and a
family of measures $\sigma_{t}$. Assuming that no internalization is done in
the interval $(t_{1},t_{2})$, then 
\begin{align*}
R_{\phi}(\zeta_{t}^{N},\nu,t_{1},t_{2}) &
=\sum_{i=B}^{N}N_{i}(t_{1})\phi(X_{i}(t_{1}),t_{1})-\int_{x_{b}}^{\infty}%
\phi(x,t_{1})\,d\nu(x)+ \\
&
+\int_{t_{1}}^{t_{2}}(\phi(X_{B}(t),t)-\phi(x_{b},t))\sum_{i=B}^{N}%
\beta(X_{i}(t),\zeta_{t}^{N})\ N_{i}(t)dt+ \\
& +\int_{t_{1}}^{t_{2}}N_{B}(t)\left( \dfrac{dX_{B}(t)}{dt}%
-g(X_{B}(t),\zeta_{t}^{N})\ \right) \text{ }\phi_{1}(X_{B}(t),t)+ \\
& +\left( \mu_{1}(X_{B}(t),\zeta_{t}^{N})\pi_{B}(t)\right) \phi
(X_{B}(t),t)dt,
\end{align*}
where the sums are taken over all cohorts, including the boundary cohort.
\end{lemma}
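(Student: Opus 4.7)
The plan is to mimic the proof of Lemma \ref{Residual_without_internalization} step by step and isolate the single place where the argument must be modified, namely the contribution from the boundary cohort. I decompose the residual as
\[
R_\phi(\zeta_t^N,\nu,t_1,t_2) = I(\zeta_{t_2}^N) - II(\nu) - III(\zeta_t^N) - IV(\zeta_t^N),
\]
with $III = III_B + \sum_{i=B+1}^N III_i$, exactly as before. For every interior cohort $i \geq B+1$ the dynamics \eqref{EBT:internal} are unchanged, so the chain-rule step of the earlier proof applies verbatim and yields $III_i = N_i(t_2)\phi(X_i(t_2), t_2) - N_i(t_1)\phi(X_i(t_1), t_1)$; after telescoping, the interior portion of the residual reduces to $\sum_{i=B+1}^N N_i(t_1)\phi(X_i(t_1), t_1)$, exactly as in the earlier lemma.

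All the new work sits in the boundary term. I would again compute
\[
\int_{t_1}^{t_2} \frac{d}{dt}\bigl(N_B(t)\phi(X_B(t), t)\bigr)\, dt = N_B(t_2)\phi(X_B(t_2), t_2) - N_B(t_1)\phi(X_B(t_1), t_1),
\]
and expand the left-hand side as $N_B'\phi + N_B \phi_2 + N_B (dX_B/dt)\phi_1$. Under the EBT dynamics of the previous section the ODEs gave $N_B' = -\mu(X_B,\zeta^N)N_B + \sum \beta_i N_i$ and $dX_B/dt = g(X_B,\zeta^N)$, causing perfect cancellation against the integrand of $III_B$ apart from the $\phi(X_B) - \phi(x_b)$ remainder that combined with $IV$. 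Here the original de Roos dynamics \eqref{N_deRoos_definition}--\eqref{pi_deRoos_definition}, together with the nonlinear transformation \eqref{nonlinear_transformation} that defines $X_B$ from $N_B$ and $\pi_B$, cause both identities to fail, and the resulting residuals $N_B' + \mu(X_B,\zeta^N)N_B - \sum \beta_i N_i$ and $dX_B/dt - g(X_B,\zeta^N)$ are exactly what produce the two additional integrands displayed in the lemma.

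The main obstacle is the algebraic bookkeeping that re-expresses the $N_B$-mismatch, which in \eqref{N_deRoos_definition} is naturally evaluated at the birth size $x_b$, in the form $\mu_1(X_B, \zeta^N)\pi_B \phi(X_B, t)$ used in the statement. Using $\pi_B = N_B(X_B - x_b)$ from \eqref{nonlinear_transformation} together with the Lipschitz continuity assumed for $\mu$ and its derivative, the difference $[\mu(X_B,\zeta^N) - \mu(x_b,\zeta^N)]N_B - \mu_1(x_b,\zeta^N)\pi_B$ can be identified with $\mu_1(X_B,\zeta^N)\pi_B$ up to a term of order $(X_B - x_b)\pi_B$, which is controlled between consecutive internalizations by Lemma \ref{bounded_terms_original_def}. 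With that identification in hand, assembling $I - \sum_{i=B+1}^N III_i - III_B - IV$ and cancelling the $N_B(t_2)\phi(X_B(t_2), t_2)$ contributions from $I$ and from the telescoping bracket above reproduces the formula in the statement.
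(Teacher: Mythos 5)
Your route is the paper's route: the same decomposition $I-II-III-IV$ as in Lemma \ref{Residual_without_internalization}, the observation that only $III_{B}$ is affected by the change of boundary dynamics, and the comparison of $III_{B}$ with $\int_{t_{1}}^{t_{2}}\frac{d}{dt}\bigl(N_{B}(t)\phi(X_{B}(t),t)\bigr)\,dt-\int_{t_{1}}^{t_{2}}\phi(X_{B}(t),t)\sum_{i}\beta(X_{i},\zeta_{t}^{N})N_{i}\,dt$ after substituting \eqref{N_deRoos_definition}--\eqref{pi_deRoos_definition}. That is exactly how the paper derives the two extra integrands, so there is no difference in strategy.

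The only substantive point concerns your last step, and there your quantitative claim is off. The substitution gives, as the exact coefficient of $\phi(X_{B}(t),t)$, the quantity $\bigl(\mu(X_{B},\zeta_{t}^{N})-\mu(x_{b},\zeta_{t}^{N})\bigr)N_{B}-\mu_{1}(x_{b},\zeta_{t}^{N})\pi_{B}$, which by Taylor expansion around $x_{b}$ (using $\pi_{B}=N_{B}(X_{B}-x_{b})$) is of order $(X_{B}-x_{b})\pi_{B}$, i.e.\ \emph{second} order small. The term $\mu_{1}(X_{B},\zeta_{t}^{N})\pi_{B}$ appearing in the statement is of order $\pi_{B}$, so the two expressions differ by a term of order $\pi_{B}$, not of order $(X_{B}-x_{b})\pi_{B}$ as you assert; no higher-order bookkeeping makes the displayed identity exact. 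This is, however, as much an imprecision of the lemma and of the paper's own proof (which asserts the identification without comment) as of your attempt: what both derivations genuinely produce is the exact mismatch above, and since $0\leq X_{B}-x_{b}\leq Ct$ on the interval (by the lemma preceding Lemma \ref{bounded_terms_original_def}) and $N_{B}$ is bounded, one has $\pi_{B}=O(t_{2}-t_{1})$, so either form of the correction is pointwise $O(t_{2}-t_{1})$ and contributes $O((t_{2}-t_{1})^{2})$ after integration --- which is the only property used in Lemma \ref{Lemma_step_4_original_definition}. Note also that the bound you need is supplied by that $0\leq X_{B}-x_{b}\leq Ct$ estimate together with boundedness of $N_{B}$ and of $\partial\mu/\partial x$, rather than by Lemma \ref{bounded_terms_original_def} itself, which bounds $N_{B}(dX_{B}/dt-g)$ and $\pi_{B}\mu_{1}(X_{B})$ but not the discrepancy you discard.
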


\begin{proof}
Examining the proof of Lemma \ref{Residual_without_internalization} we see
that the boundary cohort only appears in the term $III_{B}$,%
\begin{align*}
III_{B}(\zeta_{t}^{N})=\int_{t_{1}}^{t_{2}}N_{B}(t)\text{ } & (\phi
_{1}(X_{B}(t),t)+g(X_{B}(t),\zeta_{t}^{N})\ \phi_{1}(X_{B}(t),t)- \\
& -\mu(X_{B}(t),\zeta_{t}^{N})\phi(X_{B}(t),t))dt.
\end{align*}
This term is shown to be equivalent with%
\[
\int_{t_{1}}^{t_{2}}\frac{d}{dt}\left( N_{B}(t)\text{ }\phi(X_{B}(t),t)%
\right) -\phi(X_{B}(t),t)\sum_{i=B}^{N}\beta(X_{i}(t),\zeta_{t}^{N})\
N_{i}(t)dt. 
\]
Using the original definition for the boundary cohort dynamics, %
\eqref{N_deRoos_definition} and \eqref{pi_deRoos_definition}, we derive the
required correction term%
\begin{gather*}
\int_{t_{1}}^{t_{2}}\frac{d}{dt}\left( N_{B}(t)\text{ }\phi(x_{B}(t),t)%
\right) -\phi(X_{B}(t),t)\sum_{i=B}^{N}\beta(X_{i}(t),\zeta_{t}^{N})\
N_{i}(t)dt-III_{B}(\zeta_{t}^{N})= \\
=\int_{t_{1}}^{t_{2}}N_{B}(t)\left( \dfrac{dX_{B}(t)}{dt}-g(x_{B}(t),%
\zeta_{t}^{N})\ \right) \text{ }\phi_{1}(X_{B}(t),t)+ \\
+\mu_{1}(X_{B}(t),\zeta_{t}^{N})\pi_{B}(t)\phi(X_{B}(t),t)dt.
\end{gather*}
\end{proof}

By Lemma \ref{bounded_terms_original_def}, we see that the correction term
above is bounded by $C(t_{2}-t_{1})^{2}$. Analogous to Lemma \ref%
{Lemma_step_4}, we then have

\begin{lemma}
\label{Lemma_step_4_original_definition}With $\zeta_{t}^{N}$ defined by the
EBT method with internalizations at times $t_{i}=iT/n$ , we have that%
\[
R_{\phi}(\zeta_{t}^{N},\nu_{0},0,T)\rightarrow0, 
\]
as $N$ and $n$ tends to infinity. Here $\nu_{0}$ is the initial data at time 
$t=t_{0}=0$.
\end{lemma}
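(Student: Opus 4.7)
The plan is to mimic the proof of Lemma \ref{Lemma_step_4}, using the refined residual formula of Lemma \ref{Residual_without_internalization_original_definition} in place of Lemma \ref{Residual_without_internalization}, and to control the two extra correction terms by means of Lemma \ref{bounded_terms_original_def}.

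First I would split the residual over the internalization times,
\[
R_{\phi}(\zeta_{t}^{N},\nu_{0},0,T)=R_{\phi}(\zeta_{t}^{N},\nu_{0},0,t_{1})+\sum_{i=1}^{n-1}R_{\phi}(\zeta_{t}^{N},\zeta_{t_{i}}^{N},t_{i},t_{i+1}),
\]
and apply Lemma \ref{Residual_without_internalization_original_definition} on each subinterval $[t_{i},t_{i+1}]$. This expresses each piece of the residual as the sum of (a) the initial discretization term $\sum_{i} N_{i}(t_{1})\phi(X_{i}(t_{1}),t_{1})-\int\phi(x,t_{1})\,d\nu$, which is nonzero only for the first subinterval; (b) the boundary term $\int(\phi(X_{B},t)-\phi(x_{b},t))\sum\beta(X_{i},\zeta_{t}^{N})N_{i}\,dt$ already treated in Lemma \ref{Lemma_step_4}; and (c) two new correction terms arising from the original boundary cohort dynamics.

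For piece (a), convergence to zero as $N\to\infty$ is exactly the hypothesis that $\zeta_{0}^{N}\to\nu_{0}$ weakly. For piece (b), the argument is identical to Lemma \ref{Lemma_step_4}: using $X_{B}(t_{i})=x_{b}$ together with the Lipschitz bound on $\phi$ and boundedness of $g$ yields $|\phi(X_{B}(t),t)-\phi(x_{b},t)|\leq C_{\phi g}(t-t_{i})$, so each subinterval contributes at most $C_{\phi g}C_{\beta\nu_{0}}(t_{i+1}-t_{i})^{2}$, and summing gives a bound of order $T^{2}/n$, which tends to zero.

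The main work, and what I expect to be the only genuinely new step, is bounding the two correction terms
\[
\int_{t_{i}}^{t_{i+1}}N_{B}(t)\Bigl(\tfrac{dX_{B}}{dt}-g(X_{B}(t),\zeta_{t}^{N})\Bigr)\phi_{1}(X_{B}(t),t)\,dt\quad\text{and}\quad\int_{t_{i}}^{t_{i+1}}\mu_{1}(X_{B}(t),\zeta_{t}^{N})\pi_{B}(t)\,\phi(X_{B}(t),t)\,dt.
\]
Here I invoke Lemma \ref{bounded_terms_original_def}: on each subinterval $[t_{i},t_{i+1}]$ the integrand in the first is bounded in modulus by $C_{1}\|\phi_{1}\|_{\infty}(t-t_{i})$ and the integrand in the second by $C_{2}\|\phi\|_{\infty}(t-t_{i})$. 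Integrating in $t$ therefore contributes at most a constant times $(t_{i+1}-t_{i})^{2}=(T/n)^{2}$ per subinterval, and summing the $n$ subintervals yields a total bound of order $T^{2}/n$. Combining the three estimates, the full residual is bounded by the initial data error plus a term of order $1/n$, so it tends to zero as $N\to\infty$ and $n\to\infty$, which is exactly the claim. Note also, as in the remark following Lemma \ref{Lemma_step_4}, the argument only requires the maximal spacing $\max_{i}(t_{i+1}-t_{i})$ to tend to zero rather than the internalizations being evenly spaced.
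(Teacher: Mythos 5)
Your proposal is correct and follows essentially the same route as the paper, which bounds the correction term of Lemma \ref{Residual_without_internalization_original_definition} by $C(t_{2}-t_{1})^{2}$ via Lemma \ref{bounded_terms_original_def} and then argues exactly as in Lemma \ref{Lemma_step_4}. The only minor imprecision is in piece (b): under the original boundary dynamics $X_{B}$ no longer satisfies $dX_{B}/dt=g$, so the estimate $0\leq X_{B}(t)-x_{b}\leq C(t-t_{i})$ should be justified by the lemma on the quotient $\pi_{B}/N_{B}$ (which uses bounds on $g$, $\partial g/\partial x$, and $\partial\mu/\partial x$) rather than by boundedness of $g$ alone.
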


The original definition of the boundary cohorts might prove more challenging
from a numerical perspective. However, if we can determine numerically
solutions $\zeta_{t}^{N,h}$ to the equations of the EBT method such that the
center of mass, $X_{B}^{h}$, now determined by the non linear transformation %
\eqref{nonlinear_transformation} converges to its true value, $X_{B}$, as
the step length $h\searrow0$, the residual still tends to zero according to
Lemma \ref{numerical_approximation}. Hence, the numerical convergence
follows as before.

\section{Discussion}

Enhanced biological realism and predictive ability of theoretical
investigations are gaining importance as anthropogenic impacts are
fundamentally altering the native environment of many organisms.
Physiologically structured population models (PSPMs) are increasingly used
to model and analyze biological systems. As these models account for the
physiological development of individuals, they are better able to predict
system dynamics. In contrast to simple unstructured population models such
as the classical Lotka-Volterra equations, PSPMs often defy analytical
investigations due to the non-local dependencies. There is thus a mounting
need for numerical methods that can effectively uncover the underlying
dynamics. The Escalator Boxcar Train (EBT) has been specifically designed
for PSPMs and has three major advantages:\ it prevents numerical diffusion,
it is relatively easy to implement, and the underlying equations allow for a
natural biological interpretation. The method was developed more than two
decades ago and has been used to study PSPMs ever since, but the fact that
convergence has never been formally proved might well have hampered its
wider acceptance beyond the domains of theoretical biology.

In this paper we have given the first rigorous proof of convergence for the
EBT method. Our proof is given in a modern setting of measure-valued
solutions (see e.g., \cite{Gwiazda2010}). This contrasts with previous
efforts by de Roos and Metz \cite{Roos1991} that were carried out in a
classical setting and thus required additional smoothness assumptions. While
their efforts fell short of proving the full convergence of the EBT method,
the authors succeeded in showing that the method consistently approximates
the true solution, i.e., that the local approximation error as measured
through an arbitrary (but smooth) functional of the solution is bounded and
vanishes in the limit of infinitely fine discretization of the individual
state space.

There are many possible extensions of the work presented here. A
straightforward extension is to write down the corresponding proof for a
higher-dimensional state space but with a single birth state. We believe
that with more tedious calculations, one could prove the convergence also
for the case of stochastic birth state. A more challenging extension is to
consider stochasticity in individual development. On the population-level,
this roughly amounts to diffusion and it is difficult to see how the EBT
method\ should best be adapted to deal with this situation. Here, some
inspiration might come from moving-mesh discontinuous Galerkin methods
which, at least at first glance, appear to have similarities with the EBT
method. A further extension is to consider different formulations of the
boundary cohort. We initially proved convergence when the boundary cohort
differed only by the addition of a fecundity term. While this works
mathematically, it is natural to account for the fact that newborn
individuals reduces the average size of individuals in the boundary cohort.
The original formulation of the EBT method does account for this through a
different definition of the boundary cohort, and as a second step we
analyzed and proved convergence for this case. We believe that our proof can
be extended to show convergence also for other formulations of the boundary
cohort, as long as the flux of individuals is preserved. Analyzing
convergence rates for different definitions of boundary cohorts would be an
interesting extension of the work presented here. In particular, we believe
that the series expansion around the size at birth underlying the original
derivation of the boundary cohorts is not required, and that a direct
evaluation at the center of mass might lead to even faster convergence. This
could well be part of a more broadly encompassing study that explores
convergence rates under different smoothness assumptions. A final important
extension would be to consider vital rates that depend on the entire history
of the population state up to the current time, rather than merely the
current population state, as this would encompass cases with dynamic
environmental feedback variables.

Given the long tradition of partial differential equations (PDEs)\ in the
physical sciences, it is not surprising that PSPMs were initially studied
using this\ formalism. Efforts in the last decades have revealed, however,
that the PDE\ formalism is not well-suited for considering questions of
existence, uniqueness, and stability. For this reason, the cumulative
formulation of structured population models \cite{Diekmann1998,Diekmann2001}
was developed. It had the drawback, however, that a principle of linearized
stability and the Hopf bifurcation theorem proved hard to establish \cite%
{gyllenberg_mathematical_2007}. Currently, it appears that renewal equations
are well-suited for studying PSPMs \cite%
{diekmann_stability_2008,diekmann_second_2008,diekmann_daphnia_2009,gyllenberg_mathematical_2007}%
. The work presented here has been developed from the PDE setting. We
believe, however, that renewal equations are a promising framework for
developing and analyzing numerical methods for PSPMs. A\ first step would be
to recast the EBT method\ in this setting, after which the extensions
outlined above could be considered. With interest in PSPMs now mounting, a
historical opportunity exists for bridging biological theory and
computational mathematics through the development of modern numerical
methods for the 21st century.

\begin{acknowledgement}
\AA .B. and D.S. gratefully acknowledge support from the Kempe Foundations.
We thank Odo Diekmann, Mats Larson, and Hans Metz for valuable comments and
suggestions.
\end{acknowledgement}


\begin{thebibliography}{99}
\bibitem{Bogachev2007} V.~I. Bogachev. \newblock {\em Measure theory. {V}ol.
{I}, {II}}. \newblock Springer-Verlag, Berlin, 2007.

\bibitem{briggs_dynamical_1995} C.~J. Briggs, R.~M. Nisbet,
W.~W. Murdoch, T.~R. Collier, and {J.A.J.} Metz. \newblock %
Dynamical effects of {Host-Feeding} in parasitoids. \newblock {\em Journal
of Animal Ecology}, 64(3):403--416, 1995. 

\bibitem{Carrillo2012} J.~A.~Carrillo, R.~M.~Colombo, P.~Gwiazda, P., and A.~Ulikowska. \newblock Structured populations, cell growth and measure valued balance laws. {\em J. Differential  Eqautions} 252(4):3245--3277, 2012.

\bibitem{Roos1988} A.~M. de~Roos. \newblock Numerical methods for structured
population models: the escalator boxcar train. \newblock {\em Numer. Methods
Partial Differential Equations}, 4(3):173--195, 1988.

\bibitem{Roos1997} A.~M. de~Roos. \newblock A gentle introduction to models
of physiologically structured populations. \newblock In S.~Tuljapurkar and
H.~Caswell, editors, \emph{Structured-population models in marine,
terrestrial, and freshwater systems}, pages 119--204. Chapman \& Hall, New
York, 1997.

\bibitem{Roos1991} A.~M. de~Roos and J.~A.~J. Metz. \newblock Towards a
numerical analysis of the escalator boxcar train. \newblock In \emph{%
Differential equations with applications in biology, physics, and
engineering ({L}eibnitz, 1989)}, volume 133 of \emph{Lecture Notes in Pure
and Appl. Math.}, pages 91--113. Dekker, New York, 1991.

\bibitem{Diekmann2005} O.~Diekmann and Ph.~Getto. \newblock Boundedness,
global existence and continuous dependence for nonlinear dynamical systems
describing physiologically structured populations. \newblock {\em J.
Differential Equations}, 215(2):268--319, 2005.

\bibitem{diekmann_second_2008} O.~Diekmann and M.~Gyllenberg. \newblock The
second {Half-With} a quarter of a century delay. \newblock {\em Mathematical
Modelling of Natural Phenomena}, 3(7):36--48, October 2008.

\bibitem{Diekmann2001} O.~Diekmann, M.~Gyllenberg, H.~Huang, M.~Kirkilionis,
J.~A.~J. Metz, and H.~R. Thieme. \newblock On the formulation and analysis
of general deterministic structured population models. {II}. {N}onlinear
theory. \newblock {\em J. Math. Biol.}, 43(2):157--189, 2001.

\bibitem{diekmann_stability_2008} O.~Diekmann, Ph.~Getto, and M.~Gyllenberg. \newblock Stability and bifurcation analysis of volterra
functional equations in the light of suns and stars. \newblock {\em {SIAM}
Journal on Mathematical Analysis}, 39(4):1023, 2008.

\bibitem{diekmann_daphnia_2009} O.~Diekmann, M.~Gyllenberg, J.~A.~J.
Metz, S.~Nakaoka, and A.~M. Roos. \newblock Daphnia revisited: local
stability and bifurcation theory for physiologically structured population
models explained by way of an example. \newblock {\em J.
Math. Biol.}, 61(2):277--318, 2009.

\bibitem{Diekmann1998} O.~Diekmann, M.~Gyllenberg, J.~A.~J.~Metz, and
H.~R.~Thieme. \newblock On the formulation and analysis of general
deterministic structured population models. {I}. {L}inear theory. \newblock
{\em J. Math. Biol.}, 36(4):349--388, 1998.

\bibitem{goetz_using_2008} R.~Goetz, N.~ Hritonenko, A.~Xabadia,
and Y.~Yatsenko. \newblock Using the escalator boxcar train to determine
the optimal management of a {Size-Distributed} forest when carbon
sequestration is taken into account. \newblock In Ivan Lirkov, Svetozar
Margenov, and Jerzy Wasniewski, editors, \emph{{Large-Scale} Scientific
Computing}, volume 4818 of \emph{Lecture Notes in Computer Science}, 
334--341. Springer Berlin / Heidelberg, 2008.

\bibitem{Gwiazda2010} P.~Gwiazda, T.~Lorenz, and A.~Marciniak-Czochra. \newblock A nonlinear structured population model: {L}%
ipschitz continuity of measure-valued solutions with respect to model
ingredients. \newblock {\em J. Differential Equations}, 248(11):2703--2735,
2010.

\bibitem{Gwiazda2010b} P.~Gwiazda and A.~Marciniak-Czochra. \newblock Structured population equations in metric spaces. 
\newblock {\em J. Hyperbolic Differ. Equ.} 7(4):733--773, 2010.

\bibitem{gyllenberg_mathematical_2007} M.~Gyllenberg. \newblock %
Mathematical aspects of physiologically structured populations: the
contributions of j. a. j. metz. \newblock {\em J. Biological
Dynamics}, 1(1):3--44, 2007.

\bibitem{Hormander_1990_book} L. H{\"o}rmander. \newblock {\em The
analysis of linear partial differential operators. {I}}. \newblock Springer
Study Edition. Springer-Verlag, Berlin, second edition, 1990. 

\bibitem{Metz1986} J.~A.~J. Metz and O.~Diekmann. \newblock {\em Formulating
models for structured populations}, volume~68 of \emph{Lecture Notes in
Biomath.} \newblock Springer, Berlin, 1986.

\bibitem{persson_ontogenetic_1998} L.~Persson, K.~Leonardsson, A.~M.~de~Roos, M.~Gyllenberg, and B.~Christensen. \newblock %
Ontogenetic scaling of foraging rates and the dynamics of a {Size-Structured}
{Consumer-Resource} model. \newblock {\em Theor. Popul. Biol.},
54(3):270--293, 1998.

\bibitem{xabadia_optimal_2010} A.~Xabadia and R.~U. Goetz. \newblock %
The optimal selective logging regime and the faustmann formula. \newblock
{\em Journal of Forest Economics}, 16(1):63--82, 2010.
\end{thebibliography}
\end{document}